\newcommand{\personal}[1]{}
\newcommand{\simpleblowup}[2]{#1 [ #2 ]}
\newtheorem{theorem}{Theorem}[section]
\newtheorem*{theorem*}{Theorem}
\newtheorem{lemma}[theorem]{Lemma}
\newtheorem{question}[theorem]{Question}
\DeclareMathOperator{\FR}{FR}
\title{Minimal hypergraph non-jumps}
\author{Benedict Randall Shaw}
\date{June 2025}
\begin{document}

\maketitle

\begin{abstract}
    An \(r\)-uniform hypergraph, or \(r\)-\emph{graph}, has \emph{density} \(|E(G)|/\left|V(G)^{(r)}\right|\). We say \(\alpha\) is a \emph{jump} for \(r\)-graphs if there is some constant \(\delta=\delta(\alpha)\) such that, for each \(\varepsilon>0\) and \(n\geq r\), any sufficiently large \(r\)-graph of density at least \(\varepsilon\) has a subgraph of order \(n\) and density at least \(\alpha+\delta\). For \(r=2\), all \(\alpha\) are jumps. For \(r\geq 3\), Erd\H{o}s showed all \(\left[0,\frac{r!}{r^r}\right)\) are jumps, and conjectured all \([0,1)\) are jumps. Since then, a variety of non-jumps have been proved, using a method introduced by Frankl and Rödl. 
    
Our aim in this paper is to provide a general setting for this method. As an application, we give several new non-jumps, which are smaller than any previously known. We also demonstrate that these are the smallest the current method can prove.
\end{abstract}

\section{Introduction}

For any set \(S\), let \(S^{(r)}\) be the set of all \(r\)-element subsets of \(S\). An \(r\)-\emph{uniform hypergraph}, or \(r\)-\emph{graph}, is an ordered pair \(G=(V(G),E(G))\) such that \(E\subset V^{(r)}\). This notion generalises \emph{graphs}, which are just \(2\)-graphs. We will often abuse notation to identify an \(r\)-graph with its edge set. An \(r\)-graph \(G\) has \emph{order} \(\left|V(G)\right|\), and \emph{density}
\[D(G)=\frac{\left|E(G)\right|}{\left|V(G)^{(r)}\right|},\]
the proportion of possible edges on \(V\) that are present in \(G\). We say \(\alpha\in [0,1)\) is a \emph{jump} for \(r\)-graphs if there is \(\delta(\alpha)>0\) such that for each \(\varepsilon>0\) and \(n\geq r\), any sufficiently large \(r\)-graph of density at least \(\alpha+\varepsilon\) has a subgraph of order \(n\) and density at least \(\alpha+\delta\).

The Erd\H{o}s-Stone-Simonovits theorem \cite{erdos_structure_1946} tells us that for any \(t\geq 2\), given \(\varepsilon>0\) and \(n\), any sufficiently large \(2\)-graph of density at least \(1-\frac1{t-1}+\varepsilon\) contains a complete \(t\)-partite subgraph with vertex classes of size \(n\). In particular, this subgraph has density at least \(1-\frac1t\). Hence every \(\alpha\in[0,1)\) is a jump for \(2\)-graphs, with \(\delta(\alpha)=1-\frac1t-\alpha\) where \(t\) is chosen such that \(1-\frac1{t-1}\leq \alpha <1-\frac1t\).

Erd\H{o}s \cite{erdos_extremal_1971} proved that for any \(r\), every \(\alpha\in\left[0,\frac{r!}{r^r}\right)\) is a jump for \(r\)-graphs. He conjectured that for any \(r\geq 3\), every \(\alpha\in [0,1)\) is a jump for \(r\)-graphs. He offered 1000 dollars for a proof or disproof. In 1984, Frankl and Rödl \cite{frankl_hypergraphs_1984} disproved this conjecture, showing that for any \(r\geq 3\) and \(k>2r\), the density \(\alpha=1-\frac1{k^{r-1}}\) is not a jump for \(r\)-graphs.

Erd\H{o}s also conjectured that \(\frac{r!}{r^r}\) should be a jump for \(r\)-graphs, offering 500 dollars for a proof or disproof. This remains open, and motivates interest in finding new non-jumps. In 2007, Frankl, Peng, Rödl, and Talbot \cite{frankl_note_2007} showed that for any \(r\geq 3\), the density \(\frac52\cdot\frac{r!}{r^r}\) is a non-jump for \(r\)-graphs. In 2009, Peng \cite{peng_jumping_2009} generalised this by giving a relation between different values of \(r\):

\begin{theorem}[\cite{peng_jumping_2009}] For \(r\leq s\), if \(\alpha\frac{r!}{r^r}\) is a non-jump for \(r\)-graphs, then \(\alpha\frac{s!}{s^s}\) is a non-jump for \(s\)-graphs.\label{thm:increaser}
\end{theorem}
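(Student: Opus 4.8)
The plan is to unwind what it means for $\alpha\frac{r!}{r^r}$ to be a non-jump, and then to transfer a family of witnessing $r$-graphs up to $s$-graphs by a padding construction that rescales all the relevant densities by exactly the right factor. Concretely, write $\beta=\alpha\frac{r!}{r^r}$. The hypothesis says: for every $\delta'>0$ there exist a constant $n'=n'(\delta')$, a constant $\varepsilon'=\varepsilon'(\delta')>0$, and arbitrarily large $r$-graphs $G$ with $D(G)\ge\beta+\varepsilon'$ such that every $n'$-vertex subgraph of $G$ has density less than $\beta+\delta'$; by averaging over $n'$-vertex subsets, the same upper bound then holds for every subgraph of $G$ of order at least $n'$. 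The asymmetry here is what will matter: a density lower bound for $G$ itself, and a density upper bound for all of its not-too-small subgraphs.

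Given such a $G$ on a vertex set $V$ with $|V|=N$, I would form the $s$-graph $\hat G$ on $V\sqcup W_1\sqcup\cdots\sqcup W_{s-r}$, where $W_1,\dots,W_{s-r}$ are new pairwise-disjoint sets each of size $\lfloor N/r\rfloor$, and $E(\hat G)$ consists of every set $e\cup\{w_1,\dots,w_{s-r}\}$ with $e\in E(G)$ and $w_i\in W_i$ for each $i$. Then $|V(\hat G)|=\frac{Ns}{r}(1+o(1))$ and $|E(\hat G)|=|E(G)|\,\lfloor N/r\rfloor^{s-r}$, so a short computation gives $D(\hat G)=D(G)\cdot\frac{s!\,r^r}{r!\,s^s}+o(1)$. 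Since $\beta\cdot\frac{s!\,r^r}{r!\,s^s}=\alpha\frac{s!}{s^s}$ and $D(G)\ge\beta+\varepsilon'$, this is at least $\alpha\frac{s!}{s^s}+\varepsilon$ for a fixed $\varepsilon>0$ (depending only on $\varepsilon',r,s$) once $N$ is large.

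Now take any $U\subseteq V(\hat G)$ with $|U|=n$, and set $a=|U\cap V|$ and $b_i=|U\cap W_i|$, so that $|E(\hat G[U])|=|E(G[U\cap V])|\prod_i b_i$. By AM--GM, $\prod_i b_i\le\bigl((n-a)/(s-r)\bigr)^{s-r}$. If $a\ge n'$ I would substitute $|E(G[U\cap V])|<(\beta+\delta')\binom ar$ and maximise $a^r(n-a)^{s-r}$ over $a$: its maximum is $\frac{r^r(s-r)^{s-r}}{s^s}\,n^s$, attained at $a/n=r/s$, and it cancels all the combinatorial constants, giving $D(\hat G[U])<(\beta+\delta')\frac{s!\,r^r}{r!\,s^s}+o(1)=\alpha\frac{s!}{s^s}+\delta'\frac{s!\,r^r}{r!\,s^s}+o(1)$, where the error tends to $0$ as $n\to\infty$. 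If instead $a<n'$, then $|E(G[U\cap V])|\le\binom{n'}r$ is bounded and $D(\hat G[U])=O(n^{-r})\to0$. Hence, choosing $\delta'$ small in terms of $\delta$ and then $n$ large in terms of $n'$ and $\delta$, every $n$-vertex subgraph of $\hat G$ has density below $\alpha\frac{s!}{s^s}+\delta$; as the $r$-graphs $G$ may be taken arbitrarily large, so may the $\hat G$, and this shows $\alpha\frac{s!}{s^s}$ is not a jump for $s$-graphs.

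The one load-bearing choice --- and the step I expect to be the real obstacle --- is taking $|W_i|=\lfloor N/r\rfloor$. With this size the global proportion $|V|/|V(\hat G)|$ is $r/s$, which is precisely the value of $a/n$ maximising $a^r(n-a)^{s-r}$, i.e. precisely where the densest bounded subgraph of $\hat G$ sits; this is why the two estimates above have the same main term $\alpha\frac{s!}{s^s}$, so the slack $\varepsilon'$ of $G$ above $\beta$ is carried over --- undiminished in the relevant relative sense --- to slack of $\hat G$ above $\alpha\frac{s!}{s^s}$. Any other part sizes break one of the two estimates; for instance, when $s-r\ge2$, using a single padding set, or $s-r$ sets of unequal size, would let a bounded subgraph inflate its density by a factor $\frac{(s-r)^{s-r}}{(s-r)!}>1$ and destroy the argument. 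Once the construction is pinned down, the remaining content is just the identity $\binom sr\,\frac{r^r(s-r)^{s-r}}{s^s}\cdot\frac{r!}{r^r}=\frac{s!}{s^s}$ and routine handling of floors and $o(1)$ terms.
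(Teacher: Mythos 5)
The paper does not prove this theorem: it is imported verbatim from Peng's paper and used as a black box, so there is no internal proof to compare yours against. Judged on its own, your argument is correct and self-contained. The unwinding of the negated jump definition has the quantifiers in the right order; the averaging step upgrading ``every $n'$-vertex subgraph has density $<\beta+\delta'$'' to all subgraphs of order at least $n'$ is valid; the padding construction with $s-r$ \emph{equal} parts of size $\lfloor N/r\rfloor$ gives global density $D(G)\frac{s!\,r^r}{r!\,s^s}+o(1)$ as you claim; and the subgraph bound via AM--GM together with the maximisation of $a^r(n-a)^{s-r}$ at $a=rn/s$ produces exactly the matching constant, with all error terms uniform in everything but $n$ and $N$, so the quantifier bookkeeping at the end goes through. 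Your diagnosis of why the part sizes must be equal and in ratio $r:s-r$ to $|V|$ is also the right one. This is essentially the standard construction behind Peng's result, analysed directly through densities rather than through the Lagrangian characterisation (Lemmas \ref{lem:jumpthreshold} and \ref{lem:sufficientcondition}) that the rest of this paper runs on; either route works, and yours has the advantage of not needing that machinery. One small blemish: the closing ``identity'' $\binom sr\,\frac{r^r(s-r)^{s-r}}{s^s}\cdot\frac{r!}{r^r}=\frac{s!}{s^s}$ is false for $s-r\geq 2$, since the left side equals $\frac{s!}{s^s}\cdot\frac{(s-r)^{s-r}}{(s-r)!}$; the factor you want in place of $\binom sr$ is $\frac{s!}{r!\,(s-r)^{s-r}}$, which is what your in-line computation actually uses, so this is only a typo in the summary remark and not a gap in the proof.
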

In 2021, Yan and Peng \cite{yan_non-jumping_2023} gave a smaller non-jump, showing that, the density \(\frac{12}{25}\) is a non-jump for \(3\)-graphs. Hence, for each \(r\geq 3\), the density \(\frac{54}{25}\cdot\frac{r!}{r^r}\) is a non-jump for \(r\)-graphs.  For other non-jump results, see \cite{gu_non-jumping_2018,peng_non-jumping_2007,peng_subgraph_2007,peng_using_2007,peng_using_2008,peng_jumping_2009-1,peng_generating_2008}.

All these results have essentially used the same method, which we will refer to as the \emph{Frankl-Rödl method}. We give a general setting for these results. As our main application, we find smaller non-jumps for \(r\geq 3\) than any previously known, as follows:
\begin{theorem}
    For \(r\geq 4\), the density \(2\cdot\frac{r!}{r^r}\) is a non-jump for \(r\)-graphs.\label{thm:alphastar4}
\end{theorem}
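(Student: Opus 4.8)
The plan is to invoke the general form of the Frankl--Rödl machinery set up earlier in the paper. Recall that the \emph{Lagrangian} of an \(r\)-graph \(F\) is \(\lambda(F)=\max_{x}\sum_{e\in E(F)}\prod_{i\in e}x_i\), the maximum over probability weightings \(x\) of \(V(F)\). The framework reduces the statement ``\(\alpha:=2\cdot\frac{r!}{r^r}\) is not a jump'' to the purely extremal task of exhibiting, for every \(\delta>0\), a finite \(r\)-graph \(F\) with \(\alpha<r!\,\lambda(F)<\alpha+\delta\); in other words, of realising \(\alpha\) as a limit \emph{from above} of the set \(\{\,r!\,\lambda(F):F\text{ a finite }r\text{-graph}\,\}\). (In the contrapositive: if \(\alpha\) were a jump with constant \(\delta_0\) then one extracts a finite family all of whose members \(F'\) satisfy \(r!\,\lambda(F')\ge\alpha+\delta_0\); blowing up the \(F\) above, with \(\delta=\delta_0\), yields arbitrarily large \(r\)-graphs of density bounded above \(\alpha\) in which no such \(F'\) can embed, since Lagrangians do not increase on subgraphs — contrary to the defining property of the family.) So after the reduction everything is about Lagrangians of finite \(r\)-graphs, and the work is to build the right ones.

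The heart of the construction is the base pattern \(H_r\): the \(r\)-graph on parts \(A_1,\dots,A_{r-1}\) whose edges are exactly the \(r\)-sets taking two vertices in \(A_1\) and one vertex in each of \(A_2,\dots,A_{r-1}\). For a weighting with mass \(p_j\) on \(A_j\), the edge sum tends (as the parts grow) to \(\tfrac12 p_1^2\,p_2\cdots p_{r-1}\); this objective is strictly log-concave on the simplex, so its unique maximiser is \(p_1=\tfrac2r,\ p_2=\dots=p_{r-1}=\tfrac1r\), with value \(2/r^r\). Thus the large blow-ups of \(H_r\) have \(r!\,\lambda\) tending to \(\alpha\) — but, crucially, \emph{from below}, since for any finite blow-up \(\sum_{\{u,u'\}\subseteq A_1}x_ux_{u'}<\tfrac12(\sum_{A_1}x)^2\) whenever the product is non-zero, with a deficit of order \(1/|A_1|\). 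Hence \(H_r\) alone does not suffice, and we must push the Lagrangian a controlled, vanishing amount \emph{above} \(\alpha\).

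This is achieved by a sparse perturbation in the spirit of Frankl--Rödl. Fix a large blow-up \(B\) of \(H_r\) (parts proportioned \(2:1:\dots:1\), for which the uniform weighting is optimal) and adjoin a small auxiliary \(r\)-graph \(D\), supported so that it can interact with \(B\) and chosen to be \emph{locally sparse} (a suitably quasirandom or Behrend-type \(r\)-graph, so every bounded-size — indeed every linearly-sized — set of vertices spans only a vanishing proportion of \(D\)'s potential edges). If \(D\) carries enough edges — more than the order \(|V(B)|^{\,r-1}\) needed to absorb the deficit above — then the uniform weighting already gives \(r!\,\lambda(B\cup D)>\alpha\). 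The main obstacle is the matching upper bound \(r!\,\lambda(B\cup D)\le\alpha+o(1)\): one must show that no weighting can ``exploit'' \(D\). This should be a stability argument — split an arbitrary weighting into the mass it spends on the \(H_r\)-structure of \(B\) and the mass it spends on \(D\); the former contributes at most \(\lambda(H_r)=2/r^r\) up to lower-order terms, local sparseness forces \(D\)'s contribution to be \(o(1)\), and the cross terms are controlled because the maximiser of \(\lambda(H_r)\) is essentially unique, so any near-optimal weighting is close to uniform on each part, which is exactly the regime where a locally-sparse \(D\) is useless. Letting \(D\) thin out then produces finite \(r\)-graphs with \(r!\,\lambda\) decreasing to \(\alpha\) from above, which by the reduction is what is required. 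The hypothesis \(r\ge4\) enters precisely here: \(H_r\) is spread over \(r-1\ge3\) parts, leaving enough structural room for the balancing to go through (equivalently, for a locally-sparse \(D\) with enough edges to exist); for \(r=3\) the two-part pattern \(H_3\) is too rigid and the method stalls at the larger value \(\tfrac{12}{25}\).
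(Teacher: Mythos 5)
Your reduction and your construction both match the paper's in outline: the ``accumulation from above'' criterion you state is a correct variant of Lemma \ref{lem:sufficientcondition} (it follows from Lemma \ref{lem:jumpthreshold} together with \(\lambda(H)\le\lambda(F(\mathbf{k}))=\lambda(F)\) for \(H\subseteq F(\mathbf{k})\)), and the graph you build --- a large blow-up of the pattern with a single edge containing one vertex twice, plus a locally sparse \(r\)-graph on the doubled part --- is exactly the paper's \(\simpleblowup{P}{t}\cup A^{(r)}_{m,c,t}\). The gap is in the upper bound \(r!\lambda(B\cup D)\le\alpha+o(1)\), which you flag as ``the main obstacle'' and then dispose of with an unproved stability assertion. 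The dangerous weightings are not ruled out by local sparseness of \(D\) alone: a weighting may place constant mass on \(r-1\) vertices of the doubled part and spread the rest, gaining from \emph{both} the \(D\)-edges through those vertices \emph{and} the \(B\)-edges; deciding whether this beats \(\lambda(P)\) is precisely the condition \(\lambda(\FR_v(P))=\lambda(P)\) of Theorem \ref{thm:nonjump}, and verifying it is the entire content of the paper's proof (a case analysis of a multivariate optimisation that even produces a spurious interior critical point, which must be shown to be a saddle via the Hessian). Your appeal to the near-uniqueness of the maximiser of \(\lambda(H_r)\) is circular here --- it says nothing about weightings far from uniform, which are exactly the ones in question. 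Note also that the crude decoupling \(\lambda(B\cup D)\le\lambda(B)+\lambda(D)\) cannot rescue the argument, since \(\lambda(D)\ge r^{-r}\) for \emph{any} nonempty \(D\), so \(D\)'s contribution is never \(o(1)\) over all weightings; the two contributions must be analysed jointly.

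Two further points. First, your explanation of the hypothesis \(r\ge4\) is wrong: locally sparse perturbations with \(\Theta(t^{r-1})\) edges exist for every \(r\ge3\) (Lemma \ref{lem:probabilistic}); what fails for \(r=3\) is the upper bound itself, i.e.\ \(\lambda(\FR_v(\{112\}))>\lambda(\{112\})\), which is quantified in Theorem \ref{thm:limitation}(i) and stalls the method at \(\alpha_3^*\approx0.4552\), not at \(\tfrac{12}{25}\). Second, the paper only performs the verification for \(r=4\) and deduces all \(r\ge5\) from Theorem \ref{thm:increaser}; if you insist on treating general \(r\) directly with \(H_r\), you take on a harder optimisation for no gain.
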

\begin{theorem}
    The density \(\frac6{121}\left(5\sqrt5-2\right)= 0.4552\dots\) is a non-jump for \(3\)-graphs.\label{thm:alphastar3}
\end{theorem}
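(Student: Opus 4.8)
The plan is to apply the Frankl--Rödl method, in the general form set up in this paper. Recall how that method certifies a non-jump. If \(\alpha\) were a jump with parameter \(\delta\), then, taking \(\mathcal F\) to be the (finite) set of all \(r\)-graphs of a suitably large fixed order \(n\) with density at least \(\alpha+\delta\), the jump hypothesis forces \(\pi(\mathcal F)\le\alpha\), while each \(F\in\mathcal F\) has Lagrangian \(\lambda(F):=\max_{x}\sum_{e\in E(F)}\prod_{i\in e}x_i\) exceeding \(\tfrac{\alpha}{r!}\) (the maximum being over probability distributions \(x\) on \(V(F)\); equivalently, the blow-up density \(r!\,\lambda(F)\) exceeds \(\alpha\)). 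So to prove that \(\alpha\) is a non-jump it suffices to refute the existence of any such family: given finite \(\mathcal F\) with \(m:=\min_{F\in\mathcal F}\lambda(F)>\tfrac{\alpha}{r!}\), we must produce arbitrarily large \(\mathcal F\)-free \(r\)-graphs whose density exceeds \(\alpha\) by a fixed positive amount. Since \(\lambda\) is monotone under taking subgraphs and is unchanged by blow-ups, it is enough to find an \(r\)-graph \(H\) with
\[\frac{\alpha}{r!}\;<\;\lambda(H)\;<\;m :\]
then a blow-up of \(H\) with part sizes proportional to an optimal weight vector for \(H\) is \(\mathcal F\)-free and has density tending to \(r!\,\lambda(H)>\alpha\). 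So the whole task reduces to showing that \(\tfrac{\alpha}{r!}\) is a limit point \emph{from above} of the set \(\{\lambda(H):H\text{ an }r\text{-graph}\}\) of Lagrangians.

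For Theorem~\ref{thm:alphastar3} I would produce such limit points from a one-parameter family of self-similar \(3\)-graphs. Put \(\lambda^\star=\tfrac{\alpha^\star}{6}=\tfrac{5\sqrt5-2}{121}\), which is the positive root of \(121x^2+4x-1=0\). The construction \(H_k(t)\) splits its vertex set into a ``core'' — a blow-up of a fixed small dense \(3\)-graph together with a cross-edge pattern, arranged so that its blow-up density is of order \(2\cdot\tfrac{3!}{3^3}\) rather than \(\tfrac{3!}{3^3}\) — and a ``recursive'' part, of relative weight controlled by the parameter \(t\), inside which a scaled copy of \(H_{k-1}(t)\) is placed. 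Symmetrising the Lagrangian optimisation collapses it onto the few core weights and the single recursion weight, and self-similarity turns the recursion into a fixed-point equation for \(\ell:=\lim_k\lambda(H_k(t))\); optimising over \(t\) and imposing the first-order condition eliminates \(t\) and leaves a quadratic, which one checks to be \(121\ell^2+4\ell-1=0\), so that \(\ell=\lambda^\star\) at the optimal \(t=t^\star\). Running the same scheme for \(r\ge4\) is degenerate — in the limit the cross-edges contribute only ``one extra edge'' — and gives \(\ell=2/r^r\), hence \(\alpha^\star=r!\,\ell=2\cdot\tfrac{r!}{r^r}\); that is Theorem~\ref{thm:alphastar4}.

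The step I expect to be the main obstacle is arranging that \(\lambda(H_k(t^\star))\) approaches \(\lambda^\star\) \emph{from above}, i.e.\ that the sequence is decreasing in \(k\): the obvious recursive blow-ups approach their limiting Lagrangian from \emph{below}, and are then useless for producing a non-jump. This is exactly what dictates the precise core graph and cross-edge pattern — they must be chosen so that inserting a recursive copy dilutes, rather than concentrates, weight at every scale. The second delicate point is to solve the symmetrised Lagrangian optimisation exactly, i.e.\ to verify that its maximum is attained at the claimed interior configuration, so that the fixed-point function, and hence the quadratic, are as stated. The remaining ingredients are routine and are exactly what the general framework packages: supersaturation, to pass from density \(>\alpha\) to many copies of a fixed subgraph, and the Erd\H{o}s-type lemma extracting a large blow-up from many copies. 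Finally, the optimality of \(\lambda^\star\) among admissible self-similar constructions — the claim that this is the smallest value the present method can reach — is a separate convexity/extremal analysis, and is not needed for the non-jump statement itself.
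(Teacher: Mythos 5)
Your opening reduction is sound and matches the paper's framework: by the Frankl--R\"odl characterisation of jumps (Lemma \ref{lem:jumpthreshold}), to show \(\alpha\) is a non-jump it suffices to exhibit, for every \(m>\alpha/r!\), an \(r\)-graph \(H\) with \(\alpha/r!<\lambda(H)<m\), since blow-ups of such an \(H\) are \(\mathcal F\)-free for any family with \(\min_F\lambda(F)=m\) and have density tending to \(r!\lambda(H)>\alpha\). So far this is equivalent to the paper's Lemmas \ref{lem:jumpthreshold} and \ref{lem:sufficientcondition}.

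The gap is that you never actually produce the graphs \(H\), and the one step you defer --- ``arranging that the Lagrangians approach \(\lambda^\star\) from above'' --- is not a technicality but the entire content of the method; moreover, self-similar or recursive blow-ups cannot resolve it, since a blow-up has exactly the same Lagrangian as the base graph and iterated constructions only approach their limit from below, as you yourself observe. The paper's resolution is different in kind: start from a fixed \(3\)-\emph{pattern} \(P=\{122,123,133,134,144,234\}\) with \(\lambda(P)=\lambda^\star=\tfrac{5\sqrt5-2}{121}\), take its simple blow-up with \(t\) clones of each vertex, and superimpose on the \(t\) clones of the distinguished vertex \(2\) a \emph{sparse quasi-random} \(3\)-graph \(A^{(3)}_{m,c,t}\) with at least \(ct^{2}\) edges in which every subgraph on at most \(m\) vertices spans at most \(v(H)-2\) edges (Lemma \ref{lem:probabilistic}, proved by deletion from \(G(t,p)\)). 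The extra \(ct^{2}\) edges raise the Lagrangian by about \(c/t\), strictly above \(\lambda^\star\), while the sparseness condition forces every \(\le m\)-vertex subgraph to sit inside a blow-up of the augmented pattern \(\FR_2(P)\), whence its Lagrangian is at most \(\lambda(\FR_2(P))\). The second missing piece is the verification that \(\lambda(\FR_2(P))=\lambda(P)=\lambda^\star\) exactly; this is a concrete multivariate optimisation (several boundary cases plus an interior critical-point analysis reducing to \(5b^2=4a^2\)-type relations), and it is what makes the quadratic \(121x^2+4x-1=0\) appear --- it is not a fixed-point equation of a recursion. Without a specified pattern and this computation, the proposal does not constitute a proof.
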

Henceforth, we call these non-jumping densities \(\alpha_r^*\) and \(\alpha_3^*\) respectively. Note that \(\alpha_3^*\) is slightly larger than \(\frac49\), the value of \(2\frac{r!}{r^r}\) at \(r=3\). We will show that in fact these non-jumps are the smallest that can be found using the Frankl-Rödl method.

We mention that there have also been some results to find jumps using computational methods, using Razborov's method of flag algebras \cite{razborov_3-hypergraphs_2010} to bound Turán densities. Baber and Talbot \cite{baber_hypergraphs_2011} used this to show that every \(\alpha\in [0.2299,0.2315]\cup\left[0.2871,\frac8{27}\right)\) is a jump for 3-graphs. They also noted that if \(\frac29=\frac{3!}{3^3}\) is a jump for \(3\)-graphs, then it is a very small jump, with \(\delta<10^{-4}\).

There has also been interest in the question of jumps for multigraphs rather than hypergraphs. We define \(q\)-multigraphs as multigraphs with edge multiplicities bounded by \(q\), and define the density of a multigraph \(G\) of order \(n\) as \[D(G)=\frac{\left|E(G)\right|}{{n\choose 2}}.\] Brown, Erd\H{o}s, and Simonovits \cite{brown_algorithmic_1985} proved that every density \(\alpha\in [0,2)\) is a jump for \(2\)-multigraphs, and conjectured that for any \(q\), every \(\alpha\in [0,q)\) is a jump for \(q\)-multigraphs. However, Rödl and Sidorenko \cite{rodl_jumping_1995} disproved this for \(q\geq 4\), showing that each integer \(3,4,\dots,q-1\) is a non-jump for \(q\)-multigraphs. Horn, La Fleur, and Rödl \cite{horn_jumps_2013} later proved that for \(q\geq 3\), every \(\alpha\in[0,2)\) is a jump for \(q\)-multigraphs, but that for any rational \(r\), the density \(q-r\) is a non-jump for \(q\)-multigraphs for sufficiently large \(q\). The question of whether every \(\alpha\in [0,3)\) is a jump for \(3\)-multigraphs remains open.

\,

The plan of the paper is as follows. In Section 2, we will define a number of notions which will be used throughout this paper, such as Tur\'an density, blow-ups, and Lagrangians. We prove a necessary and sufficient criterion in terms of the Tur\'an density and Lagrangians of a family \(\mathcal{F}\) for \(\alpha\) to be a jump. We generalise our notion of \(r\)-graphs to \(r\)-\emph{patterns}, and extend the definition of the Lagrangian, as well as several minor results, to this new setting.

In Section 3, we state the Frankl-R\"odl method in terms of the Lagrangians of certain \(r\)-patterns. We prove a sufficient condition for \(\alpha\) to be a non-jump, and use a random construction to meet it. This section is largely a restatement of Frankl and R\"odl's original proof, but in terms which we hope will motivate the later argument.

In Section 4, we give proofs of our new non-jumps, which reduce to standard polynomial optimisation problems. From our framework, we also demonstrate that this argument, in its current form, cannot give any smaller non-jumps.

\section{Turán densities, blow-ups, Lagrangians, and patterns}

\subsection{Turán density}

Where \(\mathcal{F}\) is a family of \(r\)-graphs, we say that an \(r\)-graph \(G\) is \emph{\(\mathcal{F}\)-free} if it contains no subgraph isomorphic to an \(r\)-graph in \(\mathcal{F}\). The \emph{Turán density} of \(\mathcal{F}\) is then
\[\pi\left(\mathcal{F}\right)=\lim_{n\to\infty}\max\left\{D(G): G\text{ is an }\mathcal{F}\text{-free }r\text{-graph of order }n\right\}.\]
This exists by a simple averaging argument. We will often write \(\pi(H)\) to mean \(\pi\left(\left\{H\right\}\right)\). Note that given a family of \(r\)-graphs \(\mathcal{F}\) and some \(\varepsilon>0\), any sufficiently large graph of density at least \(\pi\left(\mathcal{F}\right)+\varepsilon\) must contain a member of \(\mathcal{F}\) as a subgraph.

\subsection{Blow-ups}

Given an \(r\)-graph \(G\), and positive integers \(k_v: v\in V(G)\), we define the \emph{blow-up} \(G\left(\mathbf{k}\right)\) as the \(r\)-graph with vertices of the form \((v,i)\), where \(v\in V(G)\) and \(1\leq i \leq k_v\), and where the edge \((v_1,i_1)\dots(v_r,i_r)\) is present if and only if \(v_1\dots v_r\) is an edge of \(G\). We will write \(G(k)\) to mean the blow-up \(G(\mathbf{k})\) in which \(k_v=k\) for each \(v\).

For example, the blow-ups of the \(r\)-graph of a single edge are exactly the \emph{complete \(r\)-partite \(r\)-graphs}, those \(r\)-graphs \(G\) whose vertex set can be partitioned into \(r\) classes such that \(E(G)\) is exactly those edges of \(V(G)^{(r)}\) with one vertex from each class. Notice that for any \(r\)-graph \(G\), a blow-up of a blow-up of \(G\) is itself a blow-up of \(G\). Likewise, an induced subgraph of a blow-up of \(G\) is itself a blow-up of an induced subgraph of \(G\).

We will need the following folklore result:

\begin{theorem}[Blowing up]\label{thm:blow-up}
    Given a family of \(r\)-graphs \(\mathcal{F}\), and some blowup \(H(\mathbf{k})\) of an \(r\)-graph \(H\in\mathcal{F}\),
    \[\pi\left((\mathcal{F}\setminus H) \cup H(\mathbf{k})\right)=\pi\left(\mathcal{F}\right)\]
\end{theorem}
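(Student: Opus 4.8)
The plan is to prove the two inequalities $\pi((\mathcal{F}\setminus H)\cup H(\mathbf{k}))\geq\pi(\mathcal{F})$ and $\pi((\mathcal{F}\setminus H)\cup H(\mathbf{k}))\leq\pi(\mathcal{F})$ separately. Write $\mathcal{F}'=(\mathcal{F}\setminus H)\cup H(\mathbf{k})$ and $h=|V(H)|$. The first inequality is immediate: since each $k_v\geq 1$, selecting one vertex from each blown-up class exhibits $H$ as a subgraph of $H(\mathbf{k})$, so any $\mathcal{F}$-free $r$-graph is also $\mathcal{F}'$-free — it avoids every member of $\mathcal{F}\setminus H$, and if it contained $H(\mathbf{k})$ it would contain $H$. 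Hence the extremal $\mathcal{F}$-free graphs are admissible for $\mathcal{F}'$, giving $\pi(\mathcal{F}')\geq\pi(\mathcal{F})$.

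For the reverse inequality I would use a supersaturation argument. Fix $\varepsilon>0$; it suffices to show every sufficiently large $\mathcal{F}'$-free $r$-graph $G$ has density below $\pi(\mathcal{F})+\varepsilon$. Suppose not, so $D(G)\geq\pi(\mathcal{F})+\varepsilon$ with $n=|V(G)|$ large. By definition of $\pi(\mathcal{F})$ there is $m$ such that every $r$-graph on $m$ vertices of density at least $\pi(\mathcal{F})+\varepsilon/2$ contains a member of $\mathcal{F}$; since $G$ is free of every member of $\mathcal{F}\setminus H$, any such subgraph in fact contains a copy of $H$. Averaging over the $m$-subsets of $V(G)$ — a constant fraction of them have density at least $\pi(\mathcal{F})+\varepsilon/2$, since the average is at least $\pi(\mathcal{F})+\varepsilon$ and densities are at most $1$ — combined with the fact that each copy of $H$ lies in at most $\binom{n-h}{m-h}$ of those $m$-subsets, yields at least $cn^{h}$ copies of $H$ in $G$ for a constant $c=c(\mathcal{F},\varepsilon,m)>0$.

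It then remains to show that an $r$-graph on $n$ vertices with at least $cn^{h}$ copies of $H$ contains $H(\mathbf{k})$ once $n$ is large. Fix an ordering $v_1,\dots,v_h$ of $V(H)$. By averaging over balanced partitions of $V(G)$ into parts $W_1,\dots,W_h$, some partition has at least $c'n^{h}$ copies of $H$ whose $i$-th vertex lies in $W_i$ for every $i$. Regard these as the edges of an $h$-partite $h$-uniform hypergraph on $W_1\cup\dots\cup W_h$; being dense, by the hypergraph K\H{o}v\'ari--S\'os--Tur\'an theorem (Erd\H{o}s's theorem on complete $h$-partite $h$-graphs) it contains sets $T_i\subseteq W_i$ with $|T_i|=\max_v k_v$ such that every transversal of $T_1\times\dots\times T_h$ is a copy of $H$. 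Shrinking each $T_i$ to a set $V_i$ of size $k_{v_i}$ and taking the subgraph of $G$ on $\bigcup_i V_i$: for each edge $v_{a_1}\dots v_{a_r}$ of $H$ and each choice of $w_{a_j}\in V_{a_j}$, extending to a full transversal shows $\{w_{a_1},\dots,w_{a_r}\}\in E(G)$, which is exactly the statement that this subgraph contains $H(\mathbf{k})$. This contradicts $\mathcal{F}'$-freeness of $G$, so $\pi(\mathcal{F}')\leq\pi(\mathcal{F})+\varepsilon$; letting $\varepsilon\to 0$ finishes the proof.

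The main obstacle is the last step — the blow-up lemma — and within it the application of the hypergraph K\H{o}v\'ari--S\'os--Tur\'an bound together with the pigeonhole step forcing the copies of $H$ to respect a fixed partite structure; everything else is bookkeeping with averaging.
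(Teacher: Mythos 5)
Your proof is correct and is essentially the standard argument that the paper itself defers to its reference for (supersaturation to extract $\Omega(n^{h})$ copies of $H$ from an $r$-graph of density exceeding $\pi(\mathcal{F})$, then a partite averaging step plus Erd\H{o}s's hypergraph K\H{o}v\'ari--S\'os--Tur\'an theorem to upgrade many copies of $H$ to a copy of $H(\mathbf{k})$). The easy direction and all the averaging computations are handled correctly, so there is nothing to add.
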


For a proof, see \cite{chapman_hypergraph_2011}. Note that the graph of a single edge has Turán density \(0\). Hence for any \(k\), the complete \(r\)-partite \(r\)-graph with \(k\) vertices in each class also has Turán density \(0\). But these graphs have density at least \(\frac{r!}{r^r}\), implying Erd\H{o}s' result that every \(\alpha\in \left[0,\frac{r!}{r^r}\right)\) is a jump for \(r\)-graphs.

\subsection{Lagrangians}
Given an \(r\)-graph \(G=(V,E)\), a \emph{weighting} \(w\) of \(V\) assigns to each vertex \(v\) a non-negative weight \(w(v)\) such that the total weight \(\sum_{v\in V}w(v)\) is \(1\). We say an edge \(e\) has weight
\[w(e)=\prod_{v\in e}w(v),\]
and that the entire \(r\)-graph has weight
\[w(G)=\sum_{e\in G}w(e).\]
Now the \emph{Lagrangian} of \(G\) is
\[\lambda(G)=\max\{w(G):w\text{ is a weighting of }V(G)\}.\]
This is attained by a compactness argument. We call a weighting achieving this maximum a \textit{maximal weighting of \(G\)}. For another hypergraph \(H\) with \(V(H)\subseteq V(G)\), \(w(H)\) is defined in the same way, even though the total weight on \(V(H)\) may now be less than one. Notice that whenever \(H\subseteq G\), \(w(H)\leq w(G)\). 

The Lagrangian was first introduced in 1965 by Motzkin and Straus \cite{motzkin_maxima_1965}, who characterised it completely for \(2\)-graphs:

\begin{theorem}[Motzkin and Straus \cite{motzkin_maxima_1965}]
	Let \(G\) be a graph, and let \(K_t\) be the largest complete subgraph of \(G\). Then
	\[\lambda(G)=\lambda(K_t)=\frac12\left(1-\frac1t\right).\]
\end{theorem}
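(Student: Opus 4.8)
The plan is to establish the two inequalities $\lambda(G)\ge\frac12\left(1-\frac1t\right)$ and $\lambda(G)\le\frac12\left(1-\frac1t\right)$ separately, after first pinning down the value $\lambda(K_m)=\frac12\left(1-\frac1m\right)$ for every complete graph $K_m$ by an elementary calculation. For the latter: if $x_1,\dots,x_m$ are weights summing to $1$, then since every pair is an edge, $w(K_m)=\sum_{i<j}x_ix_j=\frac12\left(\left(\sum_i x_i\right)^2-\sum_i x_i^2\right)=\frac12\left(1-\sum_i x_i^2\right)$, and $\sum_i x_i^2\ge\frac1m$ by Cauchy--Schwarz (equivalently, by convexity), with equality for the uniform weighting $x_i=\frac1m$. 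Hence $\lambda(K_m)=\frac12\left(1-\frac1m\right)$; in particular the formula in the statement is consistent with the theorem applied to $G=K_t$ itself.

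The lower bound $\lambda(G)\ge\lambda(K_t)$ is immediate: extend a maximal weighting of the distinguished copy of $K_t\subseteq G$ to all of $V(G)$ by assigning weight $0$ to every other vertex. This is a valid weighting of $G$, and since $G$ contains all $\binom t2$ edges of that $K_t$, its weight is at least $\lambda(K_t)$.

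The real content is the upper bound $\lambda(G)\le\lambda(K_t)$, and the step I expect to be the \emph{main obstacle} is showing that some maximal weighting of $G$ is supported on a clique. I would argue as follows. Among all maximal weightings of $G$, choose one, $w$, whose support $S=\{v\in V(G):w(v)>0\}$ has minimum size, and claim $S$ spans a complete subgraph. If not, pick $u,v\in S$ with $uv\notin E(G)$, and for $\tau\in[-w(u),\,w(v)]$ let $w_\tau$ be the weighting obtained from $w$ by replacing $w(u),w(v)$ with $w(u)+\tau$ and $w(v)-\tau$ and leaving every other weight unchanged. Because $uv$ is not an edge, no edge weight is a product involving both $u$ and $v$, so each edge's contribution is affine in $\tau$ and hence $\tau\mapsto w_\tau(G)=w(G)+c\tau$ for some constant $c$. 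Maximality of $w$ forces $c\tau\le0$ on an interval whose interior contains values of both signs (as $w(u),w(v)>0$), so $c=0$; then $w_{w(v)}$ is again a maximal weighting, but its support is contained in $S\setminus\{v\}$, contradicting minimality. Therefore $S$ spans a clique, so $s:=|S|\le t$, and since $w$ vanishes outside $S$ we get $\lambda(G)=w(G)=w(G[S])\le\lambda(K_s)=\frac12\left(1-\frac1s\right)\le\frac12\left(1-\frac1t\right)$, using the monotonicity of $m\mapsto\frac12\left(1-\frac1m\right)$. Combining this with the lower bound yields $\lambda(G)=\lambda(K_t)=\frac12\left(1-\frac1t\right)$.
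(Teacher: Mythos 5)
Your proof is correct and complete: the computation of $\lambda(K_m)$, the lower bound by extending the uniform clique weighting by zeros, and the key upper-bound step (choosing a maximal weighting of minimum support and shifting weight along a non-edge, where affineness in $\tau$ forces the derivative to vanish and lets you kill a support vertex) together constitute the classical Motzkin--Straus argument. The paper itself does not prove this theorem --- it only cites \cite{motzkin_maxima_1965} and remarks on the extremal weighting --- so there is nothing to compare against; your write-up supplies exactly the standard proof one would insert there.
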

This is attained by setting \(w(v)=\frac{1}{t}\) for each vertex \(v\) of the subgraph \(K_t\), and \(w(v)=0\) otherwise. By considering setting some weights to zero, we remark that for any \(r\)-graph \(G\) and subgraph \(H\) of \(G\), we have \(\lambda(H)\leq \lambda(G)\).

We have the following crucial characterisation of the Lagrangian, observed by Frankl and Rödl \cite{frankl_hypergraphs_1984}. For completeness, we include a proof.
\begin{lemma}
    For any \(r\)-graph \(G\), as \(n\to\infty\), \label{lem:blowupdensity}
    \[\max\left\{D\left(G(\mathbf{k})\right): G(\mathbf{k})\text{ has order }n\right\}\to r!\lambda(G).\]
\end{lemma}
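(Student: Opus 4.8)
The plan is to prove the two inequalities $\limsup_n \max_{\mathbf{k}} D(G(\mathbf{k})) \le r!\lambda(G)$ and $\liminf_n \max_{\mathbf{k}} D(G(\mathbf{k})) \ge r!\lambda(G)$ separately, where in each case the maximum is over blow-ups of $G$ of order exactly $n$. Throughout, write $m = |V(G)|$ and $V(G) = \{v_1,\dots,v_m\}$, and for a blow-up $G(\mathbf{k})$ with $\sum_j k_j = n$ set $x_j = k_j/n$, so that $(x_1,\dots,x_m)$ is (nearly) a weighting of $V(G)$.

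For the upper bound, I would count edges of $G(\mathbf{k})$ directly. Any edge of $G(\mathbf{k})$ picks one vertex from each of $r$ distinct classes $(v_{j_1},\dots,v_{j_r})$ with $v_{j_1}\cdots v_{j_r} \in E(G)$, and conversely every such edge $e = v_{j_1}\cdots v_{j_r}$ of $G$ contributes exactly $k_{j_1}\cdots k_{j_r} = n^r \prod_{v\in e} x_v$ edges to $G(\mathbf{k})$. Hence $|E(G(\mathbf{k}))| = n^r \sum_{e\in E(G)} \prod_{v\in e} x_v = n^r\, w(G)$ for the weighting $w(v_j) = x_j$, and this is at most $n^r \lambda(G)$. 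Dividing by $|V(G(\mathbf{k}))^{(r)}| = \binom{n}{r} = \frac{n^r}{r!}(1 + O(1/n))$ gives $D(G(\mathbf{k})) \le r!\lambda(G)(1 + o(1))$ uniformly in $\mathbf{k}$ (the error term depends only on $n$, not on $\mathbf{k}$), which yields the $\limsup$ bound.

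For the lower bound, let $w^*$ be a maximal weighting of $G$ with $w(G) = \lambda(G)$. I would approximate $w^*$ by rationals with denominator $n$: set $k_j = \lfloor n\, w^*(v_j)\rfloor$ and then distribute the remaining $n - \sum_j k_j \le m$ vertices arbitrarily among the classes, so that $\sum_j k_j = n$ and $|k_j/n - w^*(v_j)| \le m/n$ for every $j$. Then $|E(G(\mathbf{k}))| = n^r \sum_{e\in E(G)}\prod_{v\in e}(k_v/n)$, and since the map $(x_1,\dots,x_m) \mapsto \sum_{e}\prod_{v\in e} x_v$ is a fixed polynomial (hence uniformly continuous on the unit cube, with a Lipschitz constant depending only on $G$), we get $\sum_{e}\prod_{v\in e}(k_v/n) \ge \lambda(G) - C\cdot m/n$ for a constant $C = C(G)$. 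Dividing by $\binom{n}{r}$ again and using $\binom{n}{r} = \frac{n^r}{r!}(1 - o(1))$ shows $D(G(\mathbf{k})) \ge r!\lambda(G) - o(1)$, completing the proof.

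The argument is essentially routine; the only point requiring a little care is making the error estimates \emph{uniform}. For the upper bound this is automatic since the identity $|E(G(\mathbf{k}))| = n^r w(G) \le n^r \lambda(G)$ holds exactly for every $\mathbf{k}$, with the only $n$-dependence coming from $\binom{n}{r}$ versus $n^r/r!$. For the lower bound one must check that rounding $w^*$ to a lattice point of mesh $1/n$ perturbs the edge-weight polynomial by only $O(1/n)$; this follows from the mean value theorem applied to the multilinear polynomial $\sum_e \prod_{v\in e} x_v$ on the compact cube $[0,1]^m$, whose partial derivatives are bounded in terms of $G$ alone. So the main (very mild) obstacle is bookkeeping the $o(1)$ terms; there is no substantive difficulty.
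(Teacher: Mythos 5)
Your proposal is correct and follows essentially the same route as the paper: both prove the upper bound by observing that $|E(G(\mathbf{k}))| = n^r w_{\mathbf{k}}(G) \le n^r\lambda(G)$ for the weighting $w_{\mathbf{k}}(v)=k_v/n$, and the lower bound by choosing $\mathbf{k}$ so that $k_v/n$ approximates a maximal weighting; your version merely spells out the $o(1)$ bookkeeping that the paper leaves implicit.
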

\begin{proof}
	This limit exists by an averaging argument. Consider for such a \(\mathbf{k}\) the weighting \(w_\mathbf{k}\) of \(G\) defined by \(w_\mathbf{k}(v)=k_v/n\). Then \[\left|G(\mathbf{k})\right|=\sum_{e\in H}\prod_{v\in e}k_v=n^r w_\mathbf{k}(G)\leq r!\lambda(G)+o(1).\]
	Likewise, for \(w\) a maximal weighting, by choosing each \(\mathbf{k}\) such that \(\frac{k_v}n\) approaches \(w(v)\) as \(n\to\infty\), we find blowups of density \(D(G(\mathbf{k}))\to r!\lambda(G)\), giving the desired limit.
\end{proof}

In particular, this implies that for any \(r\)-graph \(G\) and blow-up \(G(\mathbf{k})\) of \(G\), we have \(\lambda(G(\mathbf{k}))=\lambda(G)\).

Suppose that some \(r\)-graph \(G\) has \(\pi(G)\leq\alpha\), and \(\alpha < r!\lambda(G)\). Then by Theorem \ref{thm:blow-up}, each blow-up \(G(\mathbf{k})\) also has Tur\'an density \(\pi(G)\leq \alpha\). But then Lemma \ref{lem:blowupdensity} implies that sufficiently large graphs of density at least \(\alpha\) must contain large \(G(\mathbf{k})\). We can choose these to have density approaching \(r!\lambda(G)\), and so \(\alpha\) is a jump. In fact, Frankl and Rödl \cite{frankl_hypergraphs_1984} showed that we can characterise all jumps in this way. We again provide a proof, to keep this paper self-contained.

\begin{lemma}\label{lem:jumpthreshold}
		Let \(\alpha\in[0,1)\). Then \(\alpha\) is a jump if and only if there is a finite family of \(r\)-graphs \(\mathcal{F}\) such that \(\pi(\mathcal{F})\leq\alpha\), and \(\alpha<r!\lambda(H)\) for each \(H\in\mathcal{F}\).
	\end{lemma}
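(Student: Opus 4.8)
The plan is to prove both directions. The "if" direction is essentially the argument sketched in the paragraph immediately preceding the lemma, so I would just write it out carefully: suppose such a finite family $\mathcal{F}$ exists. By Theorem~\ref{thm:blow-up}, replacing each $H\in\mathcal{F}$ by any blow-up $H(\mathbf{k})$ does not change the Turán density, so $\pi(\mathcal{F}')\le\alpha$ for the modified family $\mathcal{F}'$. Fix $\delta>0$ with $\alpha+\delta<r!\lambda(H)$ for all $H\in\mathcal{F}$ (possible since $\mathcal{F}$ is finite), and let $\varepsilon>0$ and $n$ be given. A sufficiently large $r$-graph $G$ of density at least $\alpha+\varepsilon$ must contain some member $H(\mathbf{k})\in\mathcal{F}'$, where by Lemma~\ref{lem:blowupdensity} and the choice of $\delta$ we may take $\mathbf{k}$ large enough that $H(\mathbf{k})$ contains a further blow-up of $H$ of order $n$ and density at least $\alpha+\delta$; this blow-up is a subgraph of $G$ of order $n$ and density $\ge\alpha+\delta$, so $\alpha$ is a jump. (One should be slightly careful here: "$G$ contains $H(\mathbf{k})$" only follows once $\mathbf{k}$ is fixed and $G$ large, so I would first fix $\delta$, then fix $\mathbf{k}$ large enough using Lemma~\ref{lem:blowupdensity}, and only then invoke the definition of $\pi$.)

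For the "only if" direction, suppose $\alpha$ is a jump, witnessed by some $\delta=\delta(\alpha)>0$. Choose an integer $n$ large enough that $n\ge r$ and every $r$-graph of order $n$ and density at least $\alpha+\delta$ has $\lambda>\alpha/r!$; this is possible because such a graph has at least $(\alpha+\delta)\binom{n}{r}$ edges, and a small counting argument (using $\lambda(G)\ge D(G)\cdot\frac{(n-1)(n-2)\cdots(n-r+1)}{n^{r-1}}\cdot\frac1{r!}$ via the uniform weighting, up to the $r!$ normalisation) shows its Lagrangian exceeds $\alpha/r!$ once $n$ is large relative to $\delta$. Let $\mathcal{F}$ be the (finite) family of all $r$-graphs of order exactly $n$ and density at least $\alpha+\delta$. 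By construction $\alpha<r!\lambda(H)$ for each $H\in\mathcal{F}$. It remains to check $\pi(\mathcal{F})\le\alpha$: if not, there would be arbitrarily large $\mathcal{F}$-free $r$-graphs of density at least $\alpha+\varepsilon$ for some $\varepsilon>0$; but applying the jump property with this $\varepsilon$ and this $n$ produces in any sufficiently large such graph a subgraph of order $n$ and density at least $\alpha+\delta$, i.e.\ a member of $\mathcal{F}$, contradicting $\mathcal{F}$-freeness.

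The main obstacle, and the only place real care is needed, is the quantitative step asserting that density at least $\alpha+\delta$ on $n$ vertices forces $\lambda>\alpha/r!$ for $n$ sufficiently large: one must verify that the uniform weighting $w(v)=1/n$ already gives $w(G)=|E(G)|/n^r\ge(\alpha+\delta)\binom{n}{r}/n^r$, and that $\binom{n}{r}/n^r\to 1/r!$, so $r!\,w(G)\to\alpha+\delta>\alpha$; hence for all large $n$ we get $r!\lambda(G)\ge r!\,w(G)>\alpha$ uniformly over the (finitely many, for fixed $n$) such $G$. With this in hand both directions close cleanly, and the finiteness of $\mathcal{F}$ — needed so that a single $\delta$ works in the "if" direction and so that $\pi(\mathcal{F})$ behaves well — is automatic since we restrict to graphs on a fixed vertex set of size $n$.
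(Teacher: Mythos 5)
Your proof is correct and follows essentially the same route as the paper: the ``if'' direction via Theorem~\ref{thm:blow-up} and Lemma~\ref{lem:blowupdensity} with a single $\delta$ chosen using finiteness of $\mathcal{F}$, and the ``only if'' direction by taking $\mathcal{F}$ to be all $r$-graphs of a fixed large order $n$ with density at least $\alpha+\delta$, using the uniform weighting $w(v)=1/n$ and $\binom{n}{r}/n^r\to 1/r!$ to force $r!\lambda(H)>\alpha$. Your extra care in verifying $\pi(\mathcal{F})\le\alpha$ and in the quantitative Lagrangian bound is exactly what the paper leaves implicit.
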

	\begin{proof}
		We first show the `if' direction. Choose \(\delta>0\) such that \(\alpha+\delta<r!\lambda(H)\) for each \(H\in\mathcal{F}\). Suppose \(\varepsilon>0\) and \(n'\geq r\). Now Lemma \ref{lem:blowupdensity} implies that for each \(H\), some blow-up \(H(\mathbf{k}^{(H)})\) with order at least \(n'\) has density at least \(\alpha+\delta\). Then define 
		\[\mathcal{F}'=\left\{H(\mathbf{k}^{(H)}):H\in\mathcal{F}\right\}.\]
		Repeated application of Theorem \ref{thm:blow-up} shows that \(\pi(\mathcal{F}')=\pi(\mathcal{F})<\alpha+\varepsilon\). Thus sufficiently large \(r\)-graphs of density at least \(\alpha+\varepsilon\) must contain a member of \(\mathcal{F}'\), and so \(\alpha\) is a jump.
		
		We now show the `only if' direction. Take some \(\delta=\delta(\alpha)\) as in the definition of a jump, and let \(\mathcal{F}\) be the family of all \(r\)-graphs with order \(n\) and density at least \(\alpha+\delta\), for some fixed \(n\) such that
		\[\frac{{n\choose r}}{n^r}\left(\alpha+\delta\right)>\frac{\alpha}{r!}.\]
		Now we have \(\pi(\mathcal{F})\leq\alpha\). But assigning weight \(\frac1n\) to each vertex, we find that all \(H\in\mathcal{F}\) have \(\lambda(H)>\frac{\alpha}{r!}\), giving the desired result.
	\end{proof}

    For example, each \(\alpha\in \left[0,\frac{r!}{r^r}\right)\) is a jump because \(K_r^{(r)}\), the graph of a single edge, has Turán density \(0\) and Lagrangian \(r^{-r}\).

\subsection{Patterns}

Our \(r\)-graphs are \emph{simple}, in that each edge contains each vertex at most once. Hou, Li, Yang, and Zhang \cite{hou_generating_2024} introduced the notion of \emph{\(r\)-patterns}, defined in the same way as \(r\)-graphs except that we now allow an edge to be any multiset of size \(r\) on \(V\).

Given an \(r\)-pattern \(P\), and positive integers \(k_v: v\in V(P)\), we define the blow-up just as we did for \(r\)-graphs: we say \(P(\mathbf{k})\) has vertex set \(\{(v,i):v\in V(P), 1\leq i \leq k_v\}\), and the multiset \((v_1,i_1)\dots(v_r,i_r)\) is present as an edge if \(v_1\dots v_r\in E(P)\). We define the \emph{simple blowup} \(\simpleblowup{P}{\mathbf{k}}\) to be the \(r\)-graph on the same vertex set containing exactly the edges of \(P(\mathbf{k})\) which do not repeat any vertices. That is to say, \(E\left(\simpleblowup{P}{\mathbf{k}}\right)=E(P(\mathbf{k}))\cap V^{(r)}\). Again we write \(P(k)\), \(\simpleblowup{P}{k}\) for the blow-up where \(k_v=k\) for all \(v\).

In order to preserve a version of Lemma \ref{lem:blowupdensity}, we will need a slightly subtle definition of the Lagrangian for \(r\)-patterns, due to Hou, Li, Yang, and Zhang \cite{hou_generating_2024}. For an edge \(e\) of an \(r\)-pattern \(P\) in which each vertex \(v\) has multiplicity \(m_e(v)\), and a weighting \(w\) of the vertices of \(P\), we define
\[w(e)=\prod_{v\in V(P)}\frac{w(v)^{m_e(v)}}{m_e(v)!}.\]
Having defined the weight of edges, we define \(w(P)\) and \(\lambda(P)\) as before. By similar analysis to Lemma \ref{lem:blowupdensity}, we find that for any \(r\)-pattern \(P\), \[\max\left\{D\left(\simpleblowup{P}{\mathbf{k}})\right): \simpleblowup{P}{\mathbf{k}}\text{ has order }n\right\}\to r!\lambda(P).\]
In particular, again we notice that for each \(r\)-pattern \(P\) and \(\mathbf{k}\), we have \(\lambda(\simpleblowup{P}{\mathbf{k}})\leq \lambda(P(\mathbf{k}))=\lambda(P)\).

\section{The Frankl-R\"odl method}

We now introduce Frankl and R\"odl's method \cite{frankl_hypergraphs_1984} for proving non-jumps. Given an \(r\)-pattern \(P\) and \(v\in V(P)\), we define the \emph{Frankl-R\"odl construction} \(\FR_v(P)\) from \(P\) as follows:
\begin{itemize}
    \item Set \(k_v=r\), and \(k_u=1\) for all other vertices \(u\).
    \item Let \(P'\) be the \(r\)-pattern on the vertices of \(P(\mathbf{k})\) with
    \[E(P')=\left\{e\in E(P(\mathbf{k})): \forall i\in\{2,\dots,r\}, m_e((v,i))\leq 1\right\}.\]
    \item Then we define \(\FR_v(P)\) to be \(P'\cup\left\{(v,1),\dots,(v,r)\right\}\).
\end{itemize}

The Frankl-R\"odl method proves the following result:

\begin{theorem}\label{thm:nonjump}
		Given an \(r\)-pattern \(P\), let \(v\) be a vertex of \(P\) such that some maximal weighting of \(P\) assigns \(v\) positive weight. Suppose that the Frankl-R\"odl construction \(\FR_v(P)\) has
        \[\lambda\left(\FR_v(P)\right)=\lambda(P)<1.\]
		Then \(\lambda(P)\) is not a jump for \(r\)-graphs.
	\end{theorem}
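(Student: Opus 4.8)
The plan is to use Lemma~\ref{lem:jumpthreshold}: to show $\alpha := \lambda(P)$ is \emph{not} a jump, I must show that for \emph{every} finite family $\mathcal{F}$ of $r$-graphs with $\pi(\mathcal{F}) \le \alpha$, some member $H \in \mathcal{F}$ has $r!\lambda(H) \le \alpha$. Equivalently, I will construct, for each $\varepsilon>0$, arbitrarily large $r$-graphs $G$ of density $\ge \alpha - \varepsilon$ (so that $\pi(\mathcal{F}) \le \alpha$ forces $G$ to be $\mathcal{F}$-free only if $\mathcal{F}$ is ``big'') such that every subgraph of $G$ of a fixed order $n$ has density at most roughly $r!\lambda(P) = \alpha$ (up to $o(1)$); then any $H$ with $r!\lambda(H) > \alpha$ would eventually appear as a subgraph, contradiction. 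So the real content is: \emph{build a sequence of dense $r$-graphs all of whose bounded-order subgraphs have density not much above $r!\lambda(P)$.}

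First I would set up the construction. Take a large simple blow-up $\simpleblowup{P}{t}$, which by the pattern version of Lemma~\ref{lem:blowupdensity} can be made to have density approaching $r!\lambda(P)$; the hypothesis that some maximal weighting gives $v$ positive weight lets us take the blow-up proportions to be a maximal weighting with $k_v \approx w(v)\cdot(\text{order}) > 0$. Now I partition the block of vertices corresponding to $v$ into $r$ roughly equal sub-blocks, colour them $1,\dots,r$, and randomly $r$-colour \emph{each} other vertex block independently and uniformly among the $r$ colours as well (or more precisely: for the blocks other than $v$'s, a uniform random colour per vertex). The point of the $\FR_v$ construction is that it records exactly which edges survive this operation: an edge of the blow-up survives if, among the $v$-block vertices it uses, it uses each colour $i \in \{2,\dots,r\}$ at most once — and we additionally throw in the ``rainbow'' edge $\{(v,1),\dots,(v,r)\}$ on the special block, matching the last bullet of the $\FR_v$ definition. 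Iterating this colour-refinement procedure $m$ times produces $r$-graphs $G_m$ which are (essentially) large simple blow-ups of the iterated pattern $\FR_v^{(m)}(P)$-style object; since $\lambda(\FR_v(P)) = \lambda(P)$, each iterate has the same Lagrangian, so the $G_m$ retain density $\approx r!\lambda(P)$.

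The key point — and the main obstacle — is the \emph{local sparsity / "no dense small subgraph"} estimate: I must show that for any fixed $n$ and any $\eta > 0$, for $m$ large enough every $n$-vertex subgraph of $G_m$ has density at most $r!\lambda(P) + \eta$. This is where the Frankl--R\"odl argument really lives. The idea is that after many rounds of random recolouring, any small set of $n$ vertices, with high probability, ``spreads out'' across the colour classes in such a way that the induced subgraph looks like a sub-blow-up of $P$ with vertex weights bounded by what a genuine weighting of $P$ allows — because the recolouring destroys exactly the multi-edges that would let a small set pack in extra density, while the surviving structure is governed by $\lambda$ of the pattern. Concretely one fixes the $n$-set, traces back through the colourings to its ancestors in $\simpleblowup{P}{t}$, and uses a union bound over the (boundedly many) ways the $n$ vertices can distribute among blocks together with a concentration estimate showing that a typical distribution yields density $\le r!\lambda(P) + o(1)$; the rainbow edges added at each stage contribute only $o(1)$. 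I expect the bookkeeping of this union bound, and making the ``spreads out'' claim quantitative, to be the technical heart of the proof.

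Finally I would assemble: given any candidate finite family $\mathcal{F}$ with $\pi(\mathcal{F}) \le \alpha$, pick $H \in \mathcal{F}$ and suppose for contradiction $r!\lambda(H) > \alpha$; set $\eta = \tfrac{1}{2}(r!\lambda(H) - \alpha)$ and $n = |V(H)|$. Since $\pi(\mathcal{F}) \le \alpha < \alpha + \varepsilon$ and each $G_m$ has density $\ge \alpha - o(1) \ge \alpha/2 > 0$ eventually, hmm — more carefully, I arrange the $G_m$ to have density $\ge \alpha + \varepsilon$ is impossible since that would already contradict nothing; the correct framing (as in Lemma~\ref{lem:jumpthreshold}'s converse) is: for $\alpha$ to be a jump we'd need such an $\mathcal F$, and the $G_m$ have density bounded below by a fixed positive constant while all their $n$-subgraphs have density $< r!\lambda(H)$, so $G_m$ is $H$-free, hence eventually contains \emph{no} member of $\mathcal{F}$ of order $\le n$ — taking $\mathcal F$ to witness a jump at $\alpha$ then fails since $G_m$ has density $\ge \delta > 0$ yet no subgraph of order $n$ and density $\ge \alpha+\delta$. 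This contradicts the definition of jump with that $\delta$, completing the argument. The hypothesis $\lambda(P) < 1$ is used to ensure $\alpha \in [0,1)$ so that "jump" is even defined, and to keep densities below $1$.
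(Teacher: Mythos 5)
Your high-level plan (reduce to Lemma~\ref{lem:jumpthreshold} and build arbitrarily large graphs none of whose bounded-order subgraphs can serve as a witness family) has the right shape, but the construction you propose fails at the decisive point: the graphs $G_m$ you describe have density only about $r!\lambda(P)-o(1)$, i.e.\ approaching $\alpha$ \emph{from below}. A graph of density $\alpha-\varepsilon$ yields no contradiction: $\pi(\mathcal F)\le\alpha$ only forces members of $\mathcal F$ to appear in graphs of density strictly exceeding $\alpha$, and the definition of a jump only constrains graphs of density at least $\alpha+\varepsilon$. What is needed --- and what the Frankl--R\"odl method actually supplies --- is, for each $m$, a single graph $G$ with $\lambda(G)$ \emph{strictly greater} than $\lambda(P)$, all of whose subgraphs on at most $m$ vertices have Lagrangian at most $\lambda(P)$ (Lemma~\ref{lem:sufficientcondition}). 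The paper gets the strict excess by gluing onto the $v$-block of $\simpleblowup{P}{t}$ a sparse auxiliary $r$-graph $A$ with $ct^{r-1}$ edges, obtained by a deletion argument on a random $r$-graph (Lemma~\ref{lem:probabilistic}); taking $c$ large makes the gain $\tfrac{c}{t}w(v)^r$ beat the $O(1/t)$ loss incurred by passing to a simple blow-up. Your remark that the extra ``rainbow'' edges ``contribute only $o(1)$'' dismisses exactly the ingredient that makes the method work.

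Second, you leave the local estimate (``any small vertex set spreads out'') as an unproved ``technical heart,'' and the mechanism you sketch for it --- iterated random recolouring --- is not the one that does the job. Once $A$ is fixed the argument is deterministic: $A$ is chosen so that every subgraph on $s\le m$ vertices has at most $s-r+1$ edges, which lets one dominate the weight of the $A$-part of any small subgraph $H$ by a star $\{v_1\cdots v_{r-1}v_i:r\le i\le s\}$, and hence embed $H$ (up to Lagrangian) into a simple blow-up of $\FR_v(P)$; the hypothesis $\lambda(\FR_v(P))=\lambda(P)$ is used precisely there, whereas in your sketch it is never load-bearing (iterating $\FR_v$ preserves the Lagrangian but buys nothing). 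Two smaller corrections: the hypothesis $\lambda(P)<1$ is used to guarantee $vv\cdots v\notin E(P)$, so that the $v$-block of the blow-up is empty and $A$ can be placed on it; and the positivity of $w(v)$ at a maximal weighting is what makes the added edges of $A$ actually increase the weight.
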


    \begin{table}
\centering
    \begin{tabular}{|m{7em}|c|m{0.31\textwidth}|m{0.35\textwidth}|}\hline
    authors&\(r\)&\(\alpha\)&edges of \(P\)\\\hline\hline
    Frankl, Rödl \cite{frankl_hypergraphs_1984}&\(\geq 3\)&\(1-\frac1{k^{r-1}}\) for \(k>2r\)&all edges on \([k]\) not of the form \(vv\dots v\)\\\hline
    \multirow{3}{7em}{Frankl, Peng, Rödl, Talbot \cite{frankl_note_2007}}&\(3\)&\(\frac59\)&\(112,133,123,223\)\\
    &\(3\)&\(1-\frac3l+\frac{3s+2}{l^2}\) for\newline \(s\geq1\), \(l\geq 9s+6\)&\([l]^{(3)}\), and each \(ijj\) with \(j-i\mod l\in\{1,\dots,s\}\)\\\hline
    \multirow{3}{7em}{Yan, Peng \cite{yan_non-jumping_2023}}&\(3\)&\(\frac{12}{25}\)&\(112,123,223\)\\
    &\(3\)&\(\frac{2k-6k^3+4k^4-k\sqrt{4k-1}+4k^2\sqrt{4k-1}}{(2k^2+1)^2}\) for \(k\geq 2\)&\([2k+1]^{(3)}\), and each edge \(e\) with \(m_e(2k+1)=2\)\\\hline
    \end{tabular}
    \caption{Selected known non-jumps and patterns used to prove them. Results which can be derived from others in the table using Theorem \ref{thm:increaser} have been omitted.}\label{table:known}
\end{table}

All currently proven non-jumps known to us can be expressed in the language of Theorem \ref{thm:nonjump}. For more detail on selected results, see Table \ref{table:known}. Towards this theorem, we first prove a sufficient condition for a density to be a non-jump:

\begin{lemma}\label{lem:sufficientcondition}
    Given \(\alpha,r\), suppose that for each integer \(m\), there exists an \(r\)-graph \(G\) with \[\alpha<r!\lambda(G)\]
    such that any subgraph \(H\) of \(G\) on at most \(m\) vertices has \(r!\lambda(H)\leq \alpha\). Then \(\alpha\) is a non-jump for \(r\)-graphs.
\end{lemma}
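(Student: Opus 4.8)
The plan is to argue by contradiction, using the characterisation of jumps from Lemma~\ref{lem:jumpthreshold} to extract a finite witnessing family, and then to contradict the hypothesis on \(G\) by a localisation argument. So suppose \(\alpha\) were a jump. By Lemma~\ref{lem:jumpthreshold} there is a finite family \(\mathcal{F}\) of \(r\)-graphs with \(\pi(\mathcal{F})\leq\alpha\) and \(\alpha<r!\lambda(H)\) for every \(H\in\mathcal{F}\). Let \(m=\max_{H\in\mathcal{F}}\lvert V(H)\rvert\) and apply the hypothesis of the lemma with this value of \(m\): we obtain an \(r\)-graph \(G\) with \(\alpha<r!\lambda(G)\), all of whose subgraphs on at most \(m\) vertices have Lagrangian at most \(\alpha/r!\).

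Next I would produce a large dense blow-up of \(G\) that contains a member of \(\mathcal{F}\). Fix \(\varepsilon>0\) with \(\alpha+\varepsilon<r!\lambda(G)\). By Lemma~\ref{lem:blowupdensity}, \(\max\{D(G(\mathbf{k})):G(\mathbf{k})\text{ has order }n\}\to r!\lambda(G)\), so for all sufficiently large \(n\) there is a blow-up \(G(\mathbf{k})\) of order \(n\) with \(D(G(\mathbf{k}))\geq\alpha+\varepsilon\). Since \(\pi(\mathcal{F})\leq\alpha<\alpha+\varepsilon\), choosing \(n\) large enough forces such a \(G(\mathbf{k})\) to contain some \(H\in\mathcal{F}\) as a subgraph.

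It remains to bound \(\lambda(H)\). The embedded copy of \(H\) uses only vertices \((v,i)\) of \(G(\mathbf{k})\); let \(S\subseteq V(G)\) be the set of original vertices \(v\) occurring among them, so \(\lvert S\rvert\leq\lvert V(H)\rvert\leq m\). An edge of \(G(\mathbf{k})\) all of whose original vertices lie in \(S\) is precisely an edge of the blow-up \((G[S])(\mathbf{k}')\) of the induced subgraph \(G[S]\) of \(G\), where \(\mathbf{k}'\) denotes the restriction of \(\mathbf{k}\) to \(S\); hence this copy of \(H\) is a subgraph of \((G[S])(\mathbf{k}')\). Therefore \(\lambda(H)\leq\lambda\big((G[S])(\mathbf{k}')\big)=\lambda(G[S])\), using that a subgraph has no larger Lagrangian and that blowing up preserves the Lagrangian. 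Since \(G[S]\) is a subgraph of \(G\) on at most \(m\) vertices, \(r!\lambda(G[S])\leq\alpha\), so \(r!\lambda(H)\leq\alpha\), contradicting \(\alpha<r!\lambda(H)\). Hence \(\alpha\) is a non-jump.

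I expect the only step that is more than routine bookkeeping to be the localisation in the last paragraph: one must observe that a copy of a fixed bounded-order \(H\) sitting inside the blow-up \(G(\mathbf{k})\) in fact lives over a set \(S\) of at most \(m\) original vertices, so that its Lagrangian is controlled by \(\lambda(G[S])\) rather than by the (possibly much larger) \(\lambda(G)\). The remaining ingredients are immediate applications of Lemmas~\ref{lem:jumpthreshold} and~\ref{lem:blowupdensity} together with the definition of the Turán density.
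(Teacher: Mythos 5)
Your proposal is correct and follows essentially the same route as the paper: contradiction via Lemma~\ref{lem:jumpthreshold}, taking \(m\) to be the maximum order in the witnessing family, finding a dense blow-up of \(G\) containing some \(H\in\mathcal{F}\), and localising that copy to a set \(S\) of at most \(m\) original vertices so that \(\lambda(H)\leq\lambda(G[S])\leq\alpha/r!\). The only difference is that you spell out the localisation step (that the copy of \(H\) lies in a blow-up of \(G[S]\), whose Lagrangian equals that of \(G[S]\)) slightly more explicitly than the paper does.
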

\begin{proof}
    Suppose \(\alpha\) is a jump for \(r\)-graphs. Then by Lemma \ref{lem:jumpthreshold}, there is a finite family of \(r\)-graphs  \(\mathcal{F}\) with \(\pi(\mathcal{F})\leq\alpha\), and \(\alpha<r!\lambda(H)\) for each \(H\in\mathcal{F}\). Fix \(m=\max\{|V(H)|: H\in\mathcal{F}\}\), and consider the corresponding \(r\)-graph \(G\). Let \(r!\lambda(G)=\alpha+\delta\). Then we can find arbitrarily large blow-ups of \(G\) with density at least \(\alpha+\frac12\delta\). But \(\pi(\mathcal{F})\leq\alpha\) implies that some \(H\in\mathcal{F}\) is contained in some blow-up of \(G\). Take \(H'\) to be this blow-up's copy of \(H\), and let \[S=\{v:(v,i)\in H'\}\subset V(G).\] Now \(H'\) is a subgraph of a blow-up of \(G[S]\). But \(S\) contains at most \(m\) vertices, so
    \[\alpha < r!\lambda(H)\leq r!\lambda(G[S])\leq\alpha,\]
    a contradiction. Thus \(\alpha\) must be a non-jump for \(r\)-graphs.
\end{proof}

    To construct graphs meeting the conditions of Lemma \ref{lem:sufficientcondition}, we will need the following technical construction:
    
	\begin{lemma}\label{lem:probabilistic}
		Suppose \(m,r,c\) are fixed, with \(m,r\) positive integers. Then for sufficiently large \(t\), there exists an \(r\)-graph \(A=A^{(r)}_{m,c,t}\) of order \(t\), with at least \(ct^{r-1}\) edges, such that any subgraph \(H\) of \(A\) with \(r\leq v(H) \leq m\) has at most \(v(H)-r+1\) edges.
	\end{lemma}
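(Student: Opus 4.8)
The plan is a standard probabilistic deletion argument. We may assume $c>0$, since otherwise the empty $r$-graph on $t$ vertices already satisfies every requirement. First I would take $G=G^{(r)}(t,p)$, the random $r$-graph on vertex set $[t]$ in which each of the $\binom tr$ possible edges appears independently with probability $p=2r!c/t$ (legitimate once $t$ is large). Since $\binom tr=(1-o(1))t^r/r!$, the expected number of edges of $G$ is $p\binom tr=(2-o(1))ct^{r-1}$.

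Next, call a pair $(S,F)$ a \emph{bad configuration} if $S\subseteq[t]$ with $r\le|S|\le m$, if $F\subseteq S^{(r)}$ with $|F|=|S|-r+2$, and if every edge of $F$ is present in $G$; these are exactly the certificates the final graph must avoid. For a fixed $S$ with $|S|=j$ there are $\binom{\binom{j}{r}}{\,j-r+2\,}$ choices of $F$, each present with probability $p^{j-r+2}$, so, bounding $\binom tj\le t^j$, the expected number of bad configurations is at most
\[\sum_{j=r}^{m}t^{j}\binom{\binom{j}{r}}{\,j-r+2\,}\left(\frac{2r!c}{t}\right)^{j-r+2}=O\!\left(t^{r-2}\right),\]
since the $j$-th summand is $O\!\left(t^{\,j-(j-r+2)}\right)=O(t^{r-2})$ and there are finitely many of them. (The $j=r$ term vanishes, matching the fact that a simple $r$-graph on $r$ vertices has at most one edge.)

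By linearity of expectation, the expected value of $|E(G)|$ minus the number of bad configurations is $(2-o(1))ct^{r-1}-O(t^{r-2})\ge ct^{r-1}$ for all large $t$, so I would fix an outcome of $G$ for which this difference is at least $ct^{r-1}$. For that $G$, list all bad configurations and process them one at a time: on reaching $(S,F)$, if all of $F$ is still present, delete one edge of $F$; otherwise skip. Let $A$ be the resulting $r$-graph, still on $[t]$. Since at most one edge is deleted per bad configuration, $A$ has at least $ct^{r-1}$ edges. To see that $A$ has the required local sparsity, suppose some subgraph $H\subseteq A$ had $r\le v(H)\le m$ and more than $v(H)-r+1$ edges; then, writing $S=V(H)$ and choosing any $F\subseteq E(A[S])$ with $|F|=|S|-r+2$, the pair $(S,F)$ is a bad configuration of $G$ (as $A\subseteq G$), hence was processed, hence at least one edge of $F$ is missing from $A$ --- contradicting $F\subseteq E(A[S])$.

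The only step needing care is this final alteration: one must observe that deleting edges can never create a subgraph with \emph{more} edges, so that handling the original list of bad configurations of $G$ is enough, and that it suffices to bound induced subgraphs $A[S]$ since any $H$ on vertex set $S$ has at most as many edges as $A[S]$. Everything else is a routine first-moment estimate; the scaling $p=\Theta(1/t)$ is forced by wanting $\Theta(t^{r-1})$ edges while keeping the expected number of bad configurations of order $o(t^{r-1})$.
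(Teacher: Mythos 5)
Your proof is correct and takes essentially the same route as the paper: a first-moment deletion argument with $p=\Theta(c/t)$, showing the expected number of bad certificates is $O(t^{r-2})$ and hence negligible against the $\Theta(ct^{r-1})$ expected edges. The only (immaterial) difference is that you delete one edge per bad configuration $(S,F)$, whereas the paper deletes every edge of each bad induced subgraph; both cost only $O(t^{r-2})$ edges.
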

	\begin{proof}
		Write \(p=3\frac{c}{t}r!\), and let \(A^*\) be a random graph distributed as \(G(t,p)\), i.e.\ where the vertex set is \([t]\), and each edge in \([n]^{(r)}\) is present with probability \(p\) independently. Now for sufficient \(t\), the expected number of edges in \(A^*\) is at least \(2ct^{r-1}\). We will delete edges from \(A^*\) to obtain \(A\).
		
		Notice that it suffices to check the condition on \(A\) holds for \(H\) an induced subgraph. For any \(S\in [t]^{(s)}\) with \(r\leq s \leq m\), call \(G[S]\) \textit{bad} if it has at least \(s-r+2\) edges. The probability \(G[S]\) is bad is at most \[{{\left|S^{(r)}\right|}\choose{s-r+2}}p^{s-r+2}=O\left(\frac1{t^{s-r+2}}\right)\]
		as \(t\to\infty\). Thus the expected number of bad subgraphs is at most
		\[\sum_{s=r}^m {t\choose s}O\left(\frac1{t^{s-r+2}}\right)=O(t^{r-2}).\]
		
		Define \(A\) from \(A^*\) by deleting all the edges of any bad \(G[S]\). Now the expected number of edges deleted is \(O(t^{r-2})\), so the expected number of edges remaining is still at least \(ct^{r-1}\). Then some choice of \(A\) has at least this many edges, and no bad subgraphs. Thus we have the desired result.
	\end{proof}

    We are now ready to prove Theorem \ref{thm:nonjump}.

    \begin{proof}[Proof of Theorem \ref{thm:nonjump}]
    We show that for any \(m\), we can construct an \(r\)-graph \(G\) that meets the conditions of Lemma \ref{lem:sufficientcondition}. Let \(w\) be a maximal weighting of \(P\) with \(w(v)>0\), and define a weighting \(w'\) of \(G'=\simpleblowup{P}{t}\) by \(w'((v,i))=\frac1tw(v)\). Notice that each edge \(e\) of \(P\) corresponds to 
    \[\prod_{v\in e}{t\choose{m_e(v)}}=t^r\prod_{v\in e}\left(\frac1{m_e(v)!}-O\left(\frac1t\right)\right)\]
    edges in the simple blow-up \(G'\). In particular,
    \[w'(G')=w(P)-O\left(\frac1t\right).\]
    As \(r!\lambda(P)<1\), the edge \(vv\dots v\) is not in \(P\). Define \(G\) from \(G'\) by adding a copy of \(A=A^{(r)}_{m,c,t}\) on the vertex set \(\{(v,1),\dots,(v,t)\}\). Notice that
    \[w'(G)\geq w'(G')+ct^{(r-1)}\left(\frac{w(v)}{t}\right)^r=w'(G')+\frac{c}{t}w(v)^r.\]
    But now we may choose \(c\) so that for sufficiently large \(t\), we have \[w'(G)\geq w(P)+\frac1t=\alpha+\frac1t.\]
    In particular, \(\lambda(G)>\alpha\).

    Now consider a subgraph \(H\) of \(G\) on at most \(m\) vertices. Notice that \(H\) is the union of \(H'=H\cap G'\) and \(H^*=H \cap A\). Let \(w^*\) be a maximal weighting of \(H\), and write \(V(H^*)=\{v_1,\dots,v_s\}\) where \(w^*(v_1)\geq\dots\geq w^*(v_s)\). Now for each \(r\leq i\leq s\), there are at most \(i-r+1\) edges of \(H\) in \(\{v_1,\dots,v_i\}^{(r)}\). In particular, at most \(i-r+1\) edges have weight at least \(w(v_1)\cdots w(v_{r-1}) w(v_i)\). Thus
	\[w(H^*)\leq \sum_{i=r}^sw(v_1)\cdots w(v_{r-1}) w(v_i).\]
    Consider the graph \(H_0=H'\cup\{v_1v_2\dots v_{r-1}v_i: r\leq i \leq s\}\). Now we must have \(w(H)\leq w(H_0)\). But \(H_0\) is a subgraph of a simple blowup of \(\FR_v(P)\), where \(v_1,\dots,v_{r-1}\in H_0\) are blow-ups of \((v,2)\dots,(v,r)\in \FR_v(P)\), and \(\{v_r,\dots,v_s\}\) is the blow-up of \((v,1)\). But then
    \[\lambda(H)\leq\lambda(H_0)\leq \lambda(\FR_v(P))=\alpha.\]
    Thus we have met the conditions of Lemma \ref{lem:sufficientcondition}, and \(\alpha\) is a non-jump for \(r\)-graphs.
    \end{proof}

	\section{Minimal non-jumps}

    We first
		
	\begin{theorem}\label{thm:new1}
		Let \(\alpha=\frac6{121}(5\sqrt{5}-2)\approx0.4552\). Then \(\alpha\) is not a jump for \(3\)-graphs.
	\end{theorem}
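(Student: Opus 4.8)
The plan is to apply Theorem~\ref{thm:nonjump}: I want to produce a finite $3$-pattern $P$ and a vertex $v$ of $P$, receiving positive weight under some maximal weighting, with
\[\lambda\bigl(\FR_v(P)\bigr)=\lambda(P)=\tfrac{5\sqrt5-2}{121}.\]
Since $\tfrac{5\sqrt5-2}{121}<\tfrac{1}{6}$ the hypothesis $\lambda(P)<1$ is automatic, and then $\alpha=3!\,\lambda(P)=\tfrac{6}{121}(5\sqrt5-2)$ is not a jump.

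\emph{Choosing $P$.} First I would narrow the search. If $v$ had multiplicity at most $1$ in every edge of $P$, then in $\FR_v(P)$ one can spread $v$'s weight evenly over $(v,1),\dots,(v,r)$: every old edge is preserved while the new edge $\{(v,1),\dots,(v,r)\}$ gains positive weight, so $\lambda(\FR_v(P))>\lambda(P)$. Hence $v$ must occur with multiplicity $2$ in some edge. The minimal such pattern is the single doubled edge $\{v,v,u\}$; for $r\ge4$ the analogue $\{1,1,2,\dots,r-1\}$ has $\lambda=2/r^r$ and is Frankl--R\"odl stable, which is why $2\,r!/r^r$ works there, but for $r=3$ one computes $\lambda(\FR_1(\{1,1,2\}))=\tfrac{5\sqrt5-2}{121}>2/27$, so $\{1,1,2\}$ itself is not stable --- and this is exactly why $\alpha_3^*$ exceeds $4/9$. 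So I would take $P$ to be a suitable slightly larger pattern of this shape (the doubled edge together with a bounded number of extra vertices and edges, chosen weightless at the extremal weighting so as not to change $\lambda$ but to enforce stability), and then verify $\lambda(P)=\tfrac{5\sqrt5-2}{121}$ directly: write the weight polynomial $w(P)$, impose the Lagrange conditions, use the symmetry of $P$ to cut the number of variables, and reduce to a single quadratic; its relevant root gives $\lambda(P)$, and one reads off an explicit extremal weighting (with entries in $\mathbb Q(\sqrt5)$) in which $w(v)>0$.

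\emph{Bounding $\lambda(\FR_v(P))$.} The inequality $\lambda(\FR_v(P))\ge\lambda(P)$ is immediate: $v\mapsto(v,1)$ embeds $P$ into $\FR_v(P)$, so the extremal weighting of $P$ (with zero weight on $(v,2),\dots,(v,r)$) already realises $\lambda(P)$. For the reverse, I would write out $\FR_v(P)$ explicitly and bound its Lagrangian from above. Using the $S_{r-1}$-symmetry of $\FR_v(P)$ among $(v,2),\dots,(v,r)$, together with the fact that, with their total fixed, the relevant product of these weights is largest when they are equal, one may assume $(v,2),\dots,(v,r)$ all carry the same weight; $w(\FR_v(P))$ then becomes a polynomial in this common weight, the weight on $(v,1)$, and the original vertex weights. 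Substituting the normalisation, locating all critical points, and comparing with the boundary reduces this to another small polynomial optimization, whose maximum one checks to be $\lambda(P)=\tfrac{5\sqrt5-2}{121}$. Together with the previous step, Theorem~\ref{thm:nonjump} gives the claim.

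\emph{The main obstacle.} The crux is the upper bound $\lambda(\FR_v(P))\le\lambda(P)$. This is genuinely global: a first- or even second-order check at the weighting ``all of $v$ on $(v,1)$'' does not suffice, since $\FR_v(P)$ could have a strictly better critical point in which several new copies of $v$ share the weight --- which is precisely what happens for $\{1,1,2\}$ and for its iterated Frankl--R\"odl constructions. Ruling this out means understanding the whole critical set of the reduced optimization for $\FR_v(P)$, most naturally by casework on the support of a maximal weighting, with the $S_{r-1}$-symmetry keeping both the number of cases and the dimension of each subproblem bounded. Logically prior, and itself delicate, is getting $P$ right: one wants the least $\lambda(P)$ compatible with stability, and the necessary balance ``$w(v)$ is at most the total weight of the partners of $v$ in its multiplicity-$2$ edges'' has to hold with equality, which is what pins the value at $\tfrac{5\sqrt5-2}{121}$.
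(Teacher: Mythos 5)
Your framing is exactly the paper's: apply Theorem~\ref{thm:nonjump}, observe that \(v\) must lie in a doubled edge \(uvv\), and recognise that the target value \(\tfrac{1}{121}(5\sqrt5-2)\) is forced because \(\FR_v\) applied to the single edge \(uvv\) already has Lagrangian \(\tfrac{1}{121}(5\sqrt5-2)>\tfrac{2}{27}\) --- this is precisely the content of Theorem~\ref{thm:limitation}(i). But the proposal stops short of an actual proof: you never exhibit a pattern \(P\) with \(\lambda(P)=\lambda\bigl(\FR_v(P)\bigr)=\tfrac{1}{121}(5\sqrt5-2)\), and producing one is the whole difficulty. Worse, your recipe for finding it --- augment the doubled edge by ``extra vertices and edges, chosen weightless at the extremal weighting so as not to change \(\lambda\)'' --- points in the wrong direction. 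Since \(uvv\in P\) forces \(\lambda(\FR_v(P))\ge\tfrac{1}{121}(5\sqrt5-2)\), any workable \(P\) must have \(\lambda(P)\) \emph{strictly larger} than \(\lambda(\{uvv\})=\tfrac{2}{27}\): the added edges must carry positive weight and raise the Lagrangian up to exactly \(\tfrac{1}{121}(5\sqrt5-2)\), while not pushing \(\lambda(\FR_v(P))\) any higher. The paper's choice is \(P=\{122,123,133,134,144,234\}\) with \(v=2\), whose maximal weighting \((w_1,w_2,w_3,w_4)=\tfrac{2\sqrt5-3}{11}(\sqrt5,1,1+\sqrt5,1)\) is supported on all four vertices.

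The second half of your plan (symmetrise \((v,2),\dots,(v,r)\), then case on the support of a maximal weighting of \(\FR_v(P)\) and solve the resulting small polynomial optimisations) is the right shape for the upper bound \(\lambda(\FR_v(P))\le\lambda(P)\), and it is what the paper does. But without a concrete \(P\) there is nothing to compute, so as written your argument establishes only that \(\alpha_3^*\) is the smallest value this method could possibly yield (which is Theorem~\ref{thm:limitation}), not that it is in fact a non-jump.
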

	\begin{proof}
		We apply Theorem \ref{thm:nonjump} to
		\[P=\left\{122,123,133,134,144,234\right\},\]
        taking \(v=2\). We illustrate this in Figure \ref{fig:n1}.
		 
		 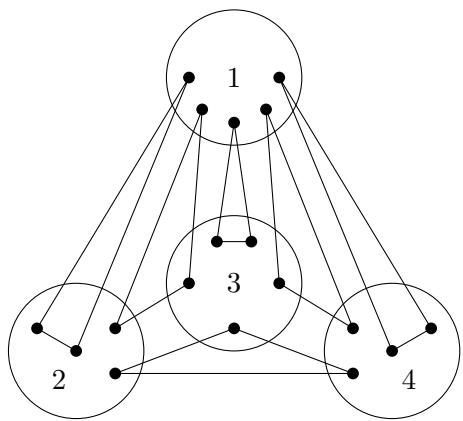
\begin{figure}[h]
		 	\centering
		 	\begin{tikzpicture}[scale=0.6]
		 	\draw (0,6.062177) circle (1.5cm);
		 	\draw (3.5,0) circle (1.5cm);
		 	\draw (-3.5,0) circle (1.5cm);
		 	\draw (0,1.5) circle (1.5cm);
		 	
		 	\fill (1,6.062177) circle (0.13cm);
		 	\fill (-1,6.062177) circle (0.13cm);
		 	\fill (0,5.062177) circle (0.13cm);
		 	\fill (0.707106,5.355071) circle (0.13cm);
		 	\fill (-0.707106,5.355071) circle (0.13cm);
		 	
		 	\fill (0,0.5) circle (0.13cm);
		 	\fill (0.382683,2.423879) circle (0.13cm);
		 	\fill (-0.382683,2.423879) circle (0.13cm);
		 	\fill (1,1.5) circle (0.13cm);
		 	\fill (-1,1.5) circle (0.13cm);
		 	
		 	\fill (2.633974,-0.5) circle (0.13cm);
		 	\fill (2.633974,0.5) circle (0.13cm);
		 	\fill (3.5,0) circle (0.13cm);
		 	\fill (4.366025,0.5) circle (0.13cm);
		 	
		 	\fill (-2.633974,-0.5) circle (0.13cm);
		 	\fill (-2.633974,0.5) circle (0.13cm);
		 	\fill (-3.5,0) circle (0.13cm);
		 	\fill (-4.366025,0.5) circle (0.13cm);
		 	
		 	\draw (1,6.062177) -- (3.5,0) -- (4.366025,0.5) -- (1,6.062177);
		 	\draw (-1,6.062177) -- (-3.5,0) -- (-4.366025,0.5) -- (-1,6.062177);
		 	\draw (0,0.5) -- (2.633974,-0.5) -- (-2.633974,-0.5) -- (0,0.5);
		 	\draw (0.707106,5.355071) -- (1,1.5) -- (2.633974,0.5) -- (0.707106,5.355071);
		 	\draw (-0.707106,5.355071) -- (-1,1.5) -- (-2.633974,0.5) -- (-0.707106,5.355071);
		 	\draw (0,5.062177) -- (0.382683,2.423879) -- (-0.382683,2.423879) -- (0,5.062177);
		 	
		 	\path (0,6.062177) node {$1$} (-3.875,-0.649519) node {$2$} (0,1.5) node {$3$} (3.875,-0.649519) node {$4$};
		 	\end{tikzpicture}
		 	\caption{The pattern used to prove Theorem \ref{thm:new1}.\label{fig:n1}}
		 \end{figure}
		
		Let \(w\) be a maximal weighting of \(P\), and write \(w_v=w(v)\) for convenience. Now
		\[w(P)={w_1\left(\frac{w_2^2+w_3^2+w_4^2}{2}+w_3(w_2+w_4)\right)+w_2w_3w_4}.\]
		
		We now maximise \(w(P)\). If any \(w_i=0\), this expression reduces to either \(w_2w_3w_4\), or to something that is at most \(w_1\frac{(1-w_1)^2}{2}\), which respectively have maxima \(\frac1{27},\frac2{27}\). Having checked these, we can restrict our search to maxima with all \(w_i\) positive. At any local maximum in this region, the partial derivatives in \(w_i\) of \(w(P)\) are all equal. Thus
		\begin{align*}\frac{w_2^2+w_3^2+w_4^2}{2}+w_3(w_2+w_4)&=w_1(w_2+w_3)+w_3w_4\\
		&=w_1(w_3+w_2+w_4)+w_2w_4\\
		&=w_1(w_4+w_3)+w_2w_3\end{align*}
		From the right hand sides, we deduce that \(w_2=w_4\) and \(w_3=w_1+w_2\). Now we solve the remaining equality:
		\[\frac{3w_2^2+w_1^2+2w_1w_2}{2}+2(w_1+w_2)w_2=w_1(w_1+2w_2)+(w_1+w_2)w_2.\]
		This ultimately reduces to \(w_1^2=5w_2^2\). This gives the following values of \(w_i\) and \(w(P)\):
		
		\[(w_1,w_2,w_3,w_4)=\frac{2\sqrt{5}-3}{11}(\sqrt5,1,1+\sqrt5,1),\]
		\[w(P)=\frac{1}{121}\left(5\sqrt5-2\right)=\frac16\alpha.\]
		
		This is greater than \(\frac2{27}\), so is optimal. Thus \(r!\lambda(P)=\alpha\), and \(w(v)>0\) as required by Theorem \ref{thm:nonjump}.
		
		It suffices to show that \(\FR_2(P)\) has Lagrangian \(\frac16\alpha\). Consider a maximal weighting \(w\) of \(\FR_2(P)\). Notice that the distribution of weight between \((2,2)\) and \((2,3)\) does not affect the weight of \(\FR_2(P)\setminus\{(2,1),(2,2),(2,3)\}\), so we may assume \(w((2,2))=w((2,3))\). To improve readability, we write \(a=w(1),b=w((2,2))+w((2,3)),c=w((2,1)),d=w(3),e=w(4)\). Now
		\[w\left(\FR_2(P)\right)=a\Bigg(\frac14b^2+bc+\frac12\left(c^2+d^2+e^2\right)+(b+c+e)d\Bigg)+(b+c)de+\frac14b^2c.\]
		It suffices to show this is at most \(\frac16\alpha\), subject to \(a+b+c+d+e=1\) and all of these non-negative. We condition on which weights are positive.
		
		\paragraph{Case 1: all \(w_i\) are positive.}
		
		Suppose \(a,b,c,d,e>0\). Now taking derivatives of \(w\left(\FR_2(P)\right)\), we find the following are all equal:
		\begin{align*}
		\frac{\partial w\left(\FR_2(P)\right)}{\partial a}&=\frac{b^2}{4}+bc+\frac{c^2+d^2+e^2}2+(b+c+e)d,\\
		\frac{\partial w\left(\FR_2(P)\right)}{\partial b}&= a\left(\frac{b}{2}+c+d\right)+de+\frac{bc}{2},\\
		\frac{\partial w\left(\FR_2(P)\right)}{\partial c}&= a(b+c+d)+de+\frac{b^2}{4},\\
		\frac{\partial w\left(\FR_2(P)\right)}{\partial d}&= a(b+c+d+e)+(b+c)e,\\
		\frac{\partial w\left(\FR_2(P)\right)}{\partial e}&= a(d+e)+(b+c)d.
		\end{align*}
		Comparing \(\frac{\partial w\left(\FR_2(P)\right)}{\partial b}\) and \(\frac{\partial w\left(\FR_2(P)\right)}{\partial c}\), we find \(c=a+\frac12b\). Comparing \(\frac{\partial w\left(\FR_2(P)\right)}{\partial d}\) and \(\frac{\partial w\left(\FR_2(P)\right)}{\partial e}\), we find \(d=a+e\). Subsituting into \(\frac{\partial w\left(\FR_2(P)\right)}{\partial c}\) and \(\frac{\partial w\left(\FR_2(P)\right)}{\partial d}\), we find
		\[a=\frac{b^2}{4e}-\frac32b+e.\]
		
		Now we have
		
		\begin{align*}0&=\frac{\partial w\left(\FR_2(P)\right)}{\partial a}-\frac{\partial w\left(\FR_2(P)\right)}{\partial b}=2e^2+\frac78b^2+\frac32ab\\
		&=e^2\left(\frac{3}{8}\left(\frac{b}{e}\right)^3-\frac{11}{8}\left(\frac{b}{e}\right)^2+\frac{3b}{2e}+2\right)\end{align*}
		
		But the cubic \(\frac38x^3-\frac{11}{8}x^2+\frac32x+2\) has no positive roots, so there are no solutions, and \(w\left(\FR_2(P)\right)\) has no local maximum with all weights positive. Thus it is sufficient to check that \(w\left(\FR_2(P)\right)\leq\frac16\alpha\) when some weight is zero.

		\paragraph{Case 2: some \(w_i\) is zero.} If \(a=0\), \(w\left(\FR_2(P)\right)=(b+c)de+\frac14b^2c\). Subject to a given value of \(x=b+c\), the first term is maximised when \(d=e\), and the second is maximised when \(b=2c\). Then 
		\[w\left(\FR_2(P)\right)=\frac14x(1-x)^2+\frac1{27}x^3.\]
		This is maximised when \(x=\frac{1}{31}(18-3\sqrt{5})\), in which case \(w\left(\FR_2(P)\right)\approx 0.0386<\frac16\alpha\).
		
		If \(b=0\) or \(c=0\), \(w\left(\FR_2(P)\right)\) is bounded above by the Lagrangian of \(G\), so is at most \(\frac16\alpha\).
		
		If \(d=0\), \[w\left(\FR_2(P)\right)=a\Bigg(\frac14b^2+bc+\frac12\left(c^2+e^2\right)\Bigg)+\frac14b^2c\]
		is maximised by some weight with \(e=0\), as otherwise we can take \(a'=a,b'=b,c'=c+e,e'=0\) without decreasing \(w\left(\FR_2(P)\right)\).
		
		If \(e=0\), \begin{align*}w\left(\FR_2(P)\right)&=a\Bigg(\frac14b^2+bc+\frac12\left(c^2+d^2\right)+(b+c)d\Bigg)+\frac14b^2c\\
		&=\frac12a(1-a)^2+\frac14b^2(c-a)\end{align*}
		
		This is clearly maximised at some weighting with \(d=0\) also, as changing to \(a'=a,b'=b,c'=c+d,d'=0\) again does not decrease \(w\left(\FR_2(P)\right)\).
		
		Thus we simply need to deal with the case \(a,b,c>0\), \(d=e=0\). But now we are maximising \(w\left(\FR_2(P)\right)=\frac12a(b+c)^2+\frac14b^2(c-a)\), so taking each derivative, we have
		\[\frac12(b+c)^2-\frac14b^2=a(b+c)+\frac12b(c-a)=a(b+c)+\frac14b^2.\]
		From this, we deduce \(c-a=\frac12b\), and thus \(\frac54b^2=a^2\). Hence this expression is maximised at
		\[a=\frac1{11}(10-3\sqrt5), b=\frac1{11}(4\sqrt5-6),c=\frac1{11}(7-\sqrt5).\]
		But these in fact give the bound \(w\left(\FR_2(P)\right)=\frac16\alpha\), as desired.
	\end{proof}

	By Theorem \ref{thm:increaser}, note that this also implies \(\frac{27r!}{121r^r}(5\sqrt5 - 2)\) is a jump for any \(r\geq 3\). However, for \(r\geq 4\), we may prove the following, smaller, non-jump.

	\begin{theorem}\label{thm:new2}
		For each \(r\geq 4\), the density \(\alpha=\frac{2r!}{r^r}\) is not a jump for \(r\)-graphs.
	\end{theorem}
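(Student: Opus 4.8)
The plan is to apply Theorem~\ref{thm:nonjump} to the simplest available pattern. Let $P$ be the $r$-pattern on the $r-1$ vertices $v_0,v_1,\dots,v_{r-2}$ whose only edge is the multiset in which $v_0$ occurs with multiplicity $2$ and each of $v_1,\dots,v_{r-2}$ with multiplicity $1$, and take the distinguished vertex to be $v=v_0$. Under a weighting $w$ this edge has weight $\tfrac{1}{2}w(v_0)^2\prod_{i=1}^{r-2}w(v_i)$, and applying the AM--GM inequality to the $r$ numbers $\tfrac{1}{2}w(v_0),\tfrac{1}{2}w(v_0),w(v_1),\dots,w(v_{r-2})$, which sum to $1$, shows that $\lambda(P)=2/r^r$, attained uniquely at $w(v_0)=2/r$ and $w(v_i)=1/r$ for $i\ge 1$. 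Hence $r!\,\lambda(P)=\alpha$, the maximal weighting gives $v_0$ positive weight, and $\lambda(P)<1$, so every hypothesis of Theorem~\ref{thm:nonjump} is in place except the identity $\lambda(\FR_{v_0}(P))=\lambda(P)$, and the proof comes down to verifying this.

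Unwinding the construction, write $a_1,\dots,a_r$ for the $r$ copies of $v_0$, with $a_1$ the copy of unrestricted multiplicity. Then $\FR_{v_0}(P)$ consists of the doubled edge $\{a_1,a_1,v_1,\dots,v_{r-2}\}$, the edges $\{a_i,a_j,v_1,\dots,v_{r-2}\}$ for $1\le i<j\le r$, and the simple edge $\{a_1,\dots,a_r\}$. The inequality $\lambda(\FR_{v_0}(P))\ge\lambda(P)$ is automatic: putting all the mass of $v_0$ onto $a_1$ and keeping the weighting above recovers weight $\lambda(P)$. So it remains to prove $\lambda(\FR_{v_0}(P))\le 2/r^r$.

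Take a maximal weighting of $\FR_{v_0}(P)$. For fixed values of $\sum_{i\ge 2}w(a_i)$ and of $\sum_j w(v_j)$, the total weight of the non-simple edges and the weight of the simple edge both increase when one equalises the weights $w(a_2),\dots,w(a_r)$ (this decreases $\sum_{i\ge 2}w(a_i)^2$, hence increases the second elementary symmetric function of these weights, and it maximises their product) and equalises the weights $w(v_1),\dots,w(v_{r-2})$; so we may assume $w(a_2)=\dots=w(a_r)=:q$, $w(v_1)=\dots=w(v_{r-2})=:u$, and $w(a_1)=:p$, with $p+(r-1)q+(r-2)u=1$. The weight then equals
\[u^{r-2}\!\left(\frac{p^2}{2}+(r-1)pq+\binom{r-1}{2}q^2\right)+pq^{r-1},\]
and the task is to show this is at most $2/r^r$ on the triangle $\{p+(r-1)q+(r-2)u=1,\ p,q,u\ge 0\}$. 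This is a standard polynomial optimisation: on the side $q=0$ the expression is $\tfrac{1}{2}p^2u^{r-2}$, whose maximum is $2/r^r$, at $p=2/r$ and $u=1/r$; on the sides $u=0$ and $p=0$ a direct check gives a maximum strictly below $2/r^r$; and the interior critical points, obtained from $\nabla=\mu(1,r-1,r-2)$ and reducible after elimination of $p$ to a single polynomial equation in the ratio $u/q$, give values below $2/r^r$. Hence $\lambda(\FR_{v_0}(P))=2/r^r=\lambda(P)$, and Theorem~\ref{thm:nonjump} gives the result. (Since we only need a single non-jump, one could instead perform this optimisation only for $r=4$ --- where the interior step is a short explicit computation --- and deduce all $r\ge 4$ from Theorem~\ref{thm:increaser}.)

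The only real obstacle is the interior case of this final optimisation. The tempting shortcut of bounding the parenthesised quadratic above by $\tfrac{1}{2}\bigl(p+(r-1)q\bigr)^2$ and then applying AM--GM loses too much once $q$ is bounded away from $0$, because the term $pq^{r-1}$ is no longer negligible there; so, just as in the proof of Theorem~\ref{thm:new1}, one must analyse the interior critical points directly rather than settle the whole simplex with one inequality.
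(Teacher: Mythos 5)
Your approach is the same as the paper's: the paper uses exactly this pattern (for $r=4$ it is the single edge $1233$ with distinguished vertex $3$), performs the same symmetrisation to reduce $\lambda(\FR_{v_0}(P))$ to a three-variable polynomial optimisation (your expression in $p,q,u$ is, for $r=4$, identical to the paper's expression in $a,b,c$ after the substitution $a=2u$, $b=3q$, $c=p$), and handles the boundary of the simplex exactly as you do. The one structural difference is that the paper only carries out the optimisation for $r=4$ and then invokes Theorem~\ref{thm:increaser} for $r\geq 5$ --- the route you mention parenthetically --- rather than attempting a uniform argument for all $r$.

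The genuine gap is the interior case, which you correctly identify as the only real obstacle and then do not do. You assert that the interior critical points ``give values below $2/r^r$'' after elimination to a polynomial in $u/q$, but this is precisely the hard step, and it is not a formality: unlike in the proof of Theorem~\ref{thm:new1}, where the analogous elimination produces a cubic with no positive root, here an interior stationary point genuinely exists. For $r=4$ the elimination yields a quintic in $a/b$ with a unique positive root near $0.1123$, corresponding to a stationary point at roughly $(a,b,c)\approx(0.077,0.687,0.236)$; the paper then computes the Hessian there and shows its determinant is negative, so the point is a saddle and the maximum lies on the boundary. (Alternatively one can just evaluate the objective at that point, getting about $0.0033<1/128$.) Either way, some second step --- locating the root and then either a Hessian check or a value check --- is required, and for general $r$ your claim that the eliminated equation's positive roots all give small values is unsubstantiated. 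To close the gap you should restrict to $r=4$, exhibit the unique positive root of the quintic, and verify the saddle-point (or value) condition explicitly, then apply Theorem~\ref{thm:increaser}.
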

	\begin{proof}
		By Theorem \ref{thm:increaser} it will be sufficient to show this for \(r=4\), where \(\alpha=\frac3{16}\) and \(\frac1{r!}\alpha=\frac1{128}\).

        We consider the \(4\)-pattern \(P\) with vertex set \([3]\) and a single edge, \(1233\). Now \(w(P)=\frac12 w(1) w(2) w(3)^2\) is maximised when \(w(1)=\frac12\), and \(w(2)=w(3)=\frac14\), giving \(\lambda(P)=\frac1{128}\). This assigns positive weight to \(3\), so it suffices to show \(\lambda\left(\FR_3(P)\right)=\frac1{128}\) also.

        Consider a maximal weighting \(w\) of \(\FR_3(P)\). We note that the expression \(w(\FR_3(P))\) is maximised when \(w(1)=w(2)\), and \(w((3,2))=w((3,3))=w((3,4))\). Again for notational convenience we write \(a=w(1)+w(2)\), \(b=w((3,2))+w((3,3))+w((3,4))\), and \(c=w((3,1))\). Now
		\[w(\FR_3(P))=\frac{a^2}{4}\left(bc+\frac13b^2+\frac12c^2\right)+\frac{b^3c}{27}.\]
		It suffices to prove that this is at most \(\frac1{128}\), subject to \(a+b+c=1\). If \(a=0\), this reduces to the last term, which is at most \(4^{-4}\). If \(b=0\) or \(c=0\), it reduces to an expression bounded above by something of the form \[2\left(\frac{a}{2}\right)^{2}\left(\frac{b}{2}\right)^2,\]
		which is easily seen to be at most \(\frac{1}{128}\). Thus we may assume \(a,b,c>0\). Now we know the following partial derivatives are all equal:
		\begin{align*}
		\frac{\partial w(\FR_3(P))}{\partial a}&=\frac{a}{2}\left(bc+\frac13b^2+\frac12c^2\right)\\
		\frac{\partial w(\FR_3(P))}{\partial b}&=\frac{a^2}{4}\left(c+\frac{2b}{3}\right)+\frac{b^2c}{9}\\
		\frac{\partial w(\FR_3(P))}{\partial c}&=\frac{a^2}{4}\left(b+c\right)+\frac{b^3}{27}.
		\end{align*}
		Equating the last two of these, we find
		\[c=\frac{3a^2}{4b}+\frac{b}{3}.\]
		Substituting in \(c\), we find \(\frac{\partial w(\FR_3(P))}{\partial a}=\frac{\partial w(\FR_3(P))}{\partial c}\) whenever, for \(x=\frac{a}{b}\),
		\[\frac9{64} x^5 - \frac{3}{16} x^4 + \frac12x^3 - \frac13x^2 + \frac{13}{36}x - \frac1{27}=0.\]
		Unfortunately this has a positive root, but only one, at \(x\approx 0.112324\). This corresponds to
		\[(a,b,c)\approx(0.0771923,0.687229,0.235579).\]
		This is a stationary point of \(w(\FR_3(P))\). Let \(f(b,c)\) be \(w(\FR_3(P))\) at the weighting given by \(a=1-b-c\). Then the Hessian matrix of \(f\) at this point is \[\begin{pmatrix}\frac{\partial^2 f}{\partial b^2}&\frac{\partial^2 f}{\partial b\partial c}\\\frac{\partial^2 f}{\partial b\partial c}&\frac{\partial^2 f}{\partial c^2}\end{pmatrix}\approx\begin{pmatrix}0.156956&0.16511\\0.16511&0.103793\end{pmatrix},\]
		which has determinant \(\approx -0.01097\). As this is negative, the second partial derivative test implies that this is a saddle point rather than a local maximum. Thus \(w(\FR_3(P))\) is maximised at a point where one of \(a,b,c\) is zero. Thus we can be sure that \(\lambda(\FR_3(P))\leq\frac1{128}\), and so \(\alpha\) is a jump, as desired.
	\end{proof}

	Notice that this density is exactly double Erd\H{o}s' conjectured jump of \(\frac{r!}{r^r}\). We now show that these two jumps are the smallest that can be proven using the Frankl-R\"odl method.
	
	\begin{theorem}\label{thm:limitation}
		Suppose some \(r\)-pattern \(P\) meets the conditions of Theorem \ref{thm:nonjump} in order to prove \(\alpha\) is a non-jump for \(r\)-graphs. Then:
		\begin{enumerate}[label=(\roman*)]
			\item if \(r=3\), \(\alpha\) is at least \(\alpha^*_3=\frac6{121}(5\sqrt{5}-2)\), and
			\item for \(r\geq 4\), \(\alpha\) is at least \(\alpha^*_r=\frac{2r!}{r^r}\).
		\end{enumerate}
	\end{theorem}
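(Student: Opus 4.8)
The plan is to play the hypothesis $\lambda(\FR_v(P))=\lambda(P)$ against an explicit family of weightings of $\FR_v(P)$. Note first that $\lambda(\FR_v(P))\ge\lambda(P)$ always: putting all of $v$'s mass on $(v,1)$ and weight $0$ on $(v,2),\dots,(v,r)$ turns a maximal weighting of $P$ into a weighting of $\FR_v(P)$ of the same weight, since the only surviving edges are the images of $E(P)$ under $v\mapsto(v,1)$. So the hypothesis says exactly that \emph{no} weighting of $\FR_v(P)$ beats $\lambda(P)$. Fix a maximal weighting $w$ of $P$ with $p:=w(v)>0$, and for $m\ge 0$ put $B_m=\sum_{e:\,m_e(v)=m}\prod_{u\ne v}\frac{w(u)^{m_e(u)}}{m_e(u)!}$, so $\lambda(P)=\sum_m\frac{p^m}{m!}B_m$. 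Since $w(v)>0$, Lagrangian optimality and Euler's identity give $\partial_{w(v)}w(P)=r\lambda(P)$, i.e.\ the \emph{link identity} $\sum_{m\ge1}\frac{p^{m-1}}{(m-1)!}B_m=r\lambda(P)$; also $vv\cdots v\notin P$ (as in the proof of Theorem~\ref{thm:nonjump}), so $B_r=0$.

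Now for admissible parameters $\beta,\gamma,\delta\ge0$ with $\beta(1-p)+\gamma+(r-1)\delta=1$, weight $\FR_v(P)$ by $w'((u,1))=\beta w(u)$ for $u\ne v$, $w'((v,1))=\gamma$, and $w'((v,j))=\delta$ for $2\le j\le r$. Counting the contribution of each edge of $P$ (an edge of $v$-multiplicity $m$ produces, via the $\le 1$ constraint on the new vertices, a contribution $\beta^{r-m}g_m(\gamma,\delta)\prod_{u\ne v}\tfrac{w(u)^{m_e(u)}}{m_e(u)!}$ with $g_m(\gamma,\delta):=\sum_j\binom{r-1}{m-j}\frac{\gamma^j}{j!}\delta^{m-j}$) and adding the new edge $\gamma\delta^{r-1}$, one gets
\[ w'(\FR_v(P))=\sum_m\beta^{r-m}g_m(\gamma,\delta)\,B_m+\gamma\delta^{r-1}. \]
(A Maclaurin-type inequality shows that among weightings of this shape equal weights on $(v,2),\dots,(v,r)$ are optimal, so nothing is lost here.) Hence the hypothesis implies: for all admissible $(\beta,\gamma,\delta)$,
\[ \sum_m\beta^{r-m}g_m(\gamma,\delta)\,B_m+\gamma\delta^{r-1}\ \le\ \sum_m\frac{p^m}{m!}B_m.\qquad(\star) \]
This is the crucial reduction: $(\star)$ abstracts away all of the combinatorics of $P$ and becomes a condition on the numbers $p\in(0,1)$ and $B_0,\dots,B_{r-1}\ge0$ alone. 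It therefore suffices to prove that for any such data satisfying the link identity and the whole family $(\star)$, $\sum_m\frac{p^m}{m!}B_m\ge\alpha^*_r/r!$.

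For $r=3$ the only $v$-multiplicities are $0,1,2$, so this is a finite optimisation in $(p,B_0,B_1,B_2)$; for $r\ge4$ I would first argue that $B_m=0$ for $m\ge3$ (I expect an edge with $m_e(v)\ge3$ to be incompatible with $(\star)$, so that such $P$ never satisfy the hypothesis), or else carry the extra variables along. One is then left to minimise $\lambda=B_0+pB_1+\tfrac{p^2}{2}B_2$ over $p\in(0,1)$, $B_0,B_1,B_2\ge0$, the link identity $B_1+pB_2=r\lambda$, and $(\star)$. I would feed in the maximising instances of $(\star)$ — e.g.\ $\beta=1$ with $\gamma=p-(r-1)\delta$ at the optimal $\delta$, together with $\beta\ne1$ variants — to cut down to a concrete polynomial system, and then verify that its minimum is $\alpha^*_r/r!$, attained at the data of the patterns in Theorems~\ref{thm:new2} and~\ref{thm:new1}: $B_0=B_1=0$, $p=2/r$, $B_2=r^{-(r-2)}$ for $r\ge4$ (whence $\lambda=\tfrac{p^2}{2}B_2=\tfrac{2}{r^r}$), and the weights listed in the proof of Theorem~\ref{thm:new1} for $r=3$. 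For these data the link identity holds and $(\star)$ is an equality at $(\beta,\gamma,\delta)=(1,p,0)$, which is consistent.

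The main obstacle is precisely this optimisation. Infinitesimal information is insufficient: differentiating $(\star)$ at $(\beta,\gamma,\delta)=(1,p,0)$ recovers only the link identity, and for $r\ge4$ the second-order (Hessian) test at that point is automatically satisfied, so one learns nothing locally; for $r=3$ one gets merely $p\le B_2$, which alone does not bound $\lambda$ away from $0$. So the proof must use $(\star)$ at values of $\delta$ comparable to $p$ — equivalently, must genuinely optimise $\lambda(\FR_v(P))$ over substantial mass on the new vertices, which is why the constructions behind Theorems~\ref{thm:new1}--\ref{thm:new2} resurface — and must eliminate the whole boundary of the feasible region in $(p,B_0,B_1,B_2)$. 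I would organise that boundary analysis by which of $B_0,B_1,B_2$ (and which of $\beta,\gamma,\delta$) vanish, in parallel with the case splits in the proofs of Theorems~\ref{thm:new1} and~\ref{thm:new2}.
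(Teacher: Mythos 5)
Your reduction to the family of inequalities $(\star)$ is sound as far as it goes, but the argument stops exactly where the theorem's content begins: you explicitly defer ``the main obstacle,'' namely the optimisation showing that any data $(p,B_0,\dots)$ satisfying the link identity and $(\star)$ has $\sum_m\frac{p^m}{m!}B_m\ge\alpha_r^*/r!$, to an unexecuted ``feed in maximising instances and verify.'' That optimisation \emph{is} the theorem; without it there is no proof. Moreover your plan for $r\ge4$ heads in the wrong direction: there is no need to show $B_m=0$ for $m\ge3$, and your expectation that an edge with $m_e(v)\ge3$ is incompatible with $(\star)$ is unjustified --- such an edge simply forces an even larger Lagrangian, since $m^m/m!$ is increasing in $m$.

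The paper's proof is far more direct and avoids the global optimisation entirely. The one structural fact it extracts from the hypothesis $\lambda(\FR_v(P))=\lambda(P)$ is that some edge $e$ of $P$ contains $v$ with multiplicity at least $2$: otherwise splitting $w(v)$ equally over $(v,1),\dots,(v,r)$ preserves the weight of every old edge while the new edge $(v,1)\cdots(v,r)$ adds $(w(v)/r)^r>0$, contradicting maximality. (This is the $\beta=1$, $\gamma=\delta=p/r$ instance of your $(\star)$, so your framework contains it, though you never draw the conclusion that some $B_m$ with $m\ge2$ is positive.) For $r\ge4$ one then weights $P$ \emph{itself} by $w(u)=m_e(u)/r$ and reads off $\lambda(P)\ge\frac1{r^r}\prod_u\frac{m_e(u)^{m_e(u)}}{m_e(u)!}\ge\frac2{r^r}$; no weighting of $\FR_v(P)$ and no instance of $(\star)$ is needed beyond the existence of $e$. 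For $r=3$ the special edge is $uvv$ with $u\ne v$, so $\FR_v(P)$ contains the sub-pattern on $u,(v,1),(v,2),(v,3)$ generated by $uvv$ together with the new edge $(v,1)(v,2)(v,3)$; by monotonicity of the Lagrangian under sub-patterns and the explicit weighting already computed in the proof of Theorem \ref{thm:new1} (the case $d=e=0$ there), $\lambda(P)=\lambda(\FR_v(P))\ge\frac16\alpha_3^*$. To repair your argument you would need to replace the open-ended optimisation over $(p,B_0,B_1,B_2)$ and the whole family $(\star)$ with these two explicit weightings.
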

	\begin{proof}
        Let \(v\) be as in the statement of Theorem \ref{thm:nonjump}, so that \(\lambda(P)=\lambda\left(\FR_v(P)\right)\). Suppose for contradiction that no edge of \(P\) contains \(v\) with multiplicity greater than one, and let \(w\) be a maximal weighting of \(P\). Now define a weighting \(w'\) of \(\FR_v(P)\) by \(w'((v,i))=\frac1{r}w(v)\) for each \(i\), and \(w'(u)=w(u)\) for each other \(u\in V(P)\). But then \(w'(\FR_v(P))=w(P)+\frac1{r^r}w(v)>\lambda(P)\), a contradiction. Hence some edge contains \(v\) with multiplicity at least two.
    
		(i): By the previous paragraph, together with the conditions of Theorem \ref{thm:nonjump}, some \(u\neq v\) has \(uvv\in P\). Consider a maximal weighting \(w\) of \(\FR_v(P)\). Noting again that we may assume \(w((v,2))=w((v,3))\), we write \(a=w(u)\), \(b=w((v,2))+w((v,3))\), and \(c=w((v,1))\). Now \(a+b+c=1\), and we have
		\[\lambda(\FR_v(P))=a\left(\frac14b^2+bc+\frac12c^2\right)+\frac14b^2c<\frac16\alpha.\]
		Now substituting in the values 
		\[a=\frac1{11}(10-3\sqrt5), b=\frac1{11}(4\sqrt5-6),c=\frac1{11}(7-\sqrt5),\]
		we find \(\alpha\geq \alpha_3^*\).
		
		(ii): Consider the edge \(e\) of \(P\) that contains \(v\) with multiplicity at least \(2\). Now consider the weighting \(w(u)=\frac1rm_e(u)\) for each vertex \(u\) of \(P\). Then
        \[\lambda(P)\geq w(P) =\frac1{r^r}\prod_{u\in V(P)}\frac{m_e(u)^{m_e(u)}}{m_e(u)!}\geq \frac1{r^r}\frac{m_e(v)^{m_e(v)}}{m_e(v)!}\geq \frac{2}{r^r}.\]
        Hence \(\alpha\geq2\frac{r!}{r^r}=\alpha_r^*\).
	\end{proof}

    We have shown that, for each \(r\geq 4\), the density \(2\cdot\frac{r!}{r^r}\) is a non-jump for \(r\)-graphs. When \(r=2\), this expression corresponds to the density \(1\), for which the notion of a jump is trivial as there are no graphs of higher density. This motivates the following question:

    \begin{question}
        Is the density \(\frac49=2\cdot\frac{3!}{3^3}\) a jump for \(3\)-graphs?
    \end{question}

    Erd\H{o}s' conjecture that \(\frac{r!}{r^r}\) should be a jump for \(r\)-graphs remains open. However, Baber and Talbot's result that \(\frac29\) cannot be a jump larger than \(10^{-4}\) for \(3\)-graphs suggests that this conjecture could be false for \(3\)-graphs. If so, it would fail for all \(r\geq 3\) by Theorem \ref{thm:increaser}. It would be extremely interesting to know whether \(\frac29\) is a jump for \(3\)-graphs, but our result shows it will require new techniques.

\bibliographystyle{plain}
\bibliography{hypergraph-jumps}

\end{document}